\numberwithin{equation}{section}
\newtheorem{theorem}{Theorem}[section]
\newtheorem{maintheorem}{Theorem}
\newtheorem{lemma}[theorem]{Lemma}
\theoremstyle{proposition}
\newtheorem{proposition}[theorem]{Proposition}
\theoremstyle{corollary}
\theoremstyle{definition}
\newtheorem{definition}[theorem]{Definition}
\newtheorem{remark}[theorem]{Remark}
\newtheorem{remarks}[theorem]{Remarks}
\newtheorem{example}[theorem]{Example}
\newtheorem{examples}[theorem]{Examples}
 \DeclareMathOperator\Ad{{\rm Ad}}
 \DeclareMathOperator\Aut{{\rm Aut}}
 \DeclareMathOperator\rk{{\rm rk}}
\newcommand\Liegrp{\sf }
\newcommand\LA{\Liegrp A}
\newcommand\LB{\Liegrp B}
\newcommand\LC{\Liegrp C}
\newcommand\LD{\Liegrp D}
\newcommand\LG{\Liegrp G}
\newcommand\SO{\Liegrp SO}
 \newcommand\Sp{\Liegrp Sp}
\newcommand\Spin{\Liegrp Spin}
\newcommand\SU{\Liegrp SU}
\newcommand\SUxU[2]{\Liegrp S(U(#1)\times\Liegrp U(#2\Liegrp ))} \newcommand\U{\Liegrp U}
\newcommand\eS{{\rm S}}
\newcommand\g{\mathfrak g}
\newcommand\Gr{\rm Gr}
\newcommand\h{\mathfrak h}
\newcommand\id{\rm id}
\newcommand\p{\mathfrak p}
\newcommand\R{\mathbb R}
\newcommand\str{\rule[-.48em]{0em}{1.7em}}
\renewcommand\k{{\mathfrak k}}
\begin{document}
\setcounter{tocdepth}{1}

%
%
%

\pagenumbering{arabic}
\pagestyle{plain}

\title{Hyperpolar actions on reducible symmetric spaces}

\author[A. Kollross]{Andreas Kollross }

\address{Universität Stuttgart, Institut für Geometrie und Topologie}

\email{kollross@mathematik.uni-stuttgart.de}

\date{\today}

\subjclass[2010]{53C35, 57S15}

\keywords{symmetric space, hyperpolar action}

\begin{abstract}
We study hyperpolar actions on reducible symmetric spaces of the compact type. Our main result is that an indecomposable hyperpolar action on a symmetric space of the compact type is orbit equivalent to a Hermann action or of cohomogeneity one.
\end{abstract}

\maketitle


\section{Introduction and Main Results}
\label{sec:Intro}


An isometric action of a compact Lie group on a Riemannian manifold is called \emph{polar} if there exists an immersed submanifold which meets every orbit such that the orbits intersect the submanifold orthogonally at each of its points. Such a submanifold is called a \emph{section} of the Lie group action. If there is a section which is flat in its induced Riemannian metric, then the action is called \emph{hyperpolar}.

Polar and hyperpolar actions have been studied by Conlon~\cite{conlon}, Szenthe~\cite{szenthe}, Palais and Terng~\cite{pt}. The problem of classifying hyperpolar actions on compact symmetric spaces was posed by Heintze, Palais, Terng and Thorbergsson in~\cite{hptt}. Natural examples for hyperpolar actions are given by the isotropy actions and isotropy representations of symmetric spaces. Moreover, actions of cohomogeneity one, i.e.\ actions whose orbits of maximal dimension are hypersurfaces, and the so-called Hermann~\cite{hermann} actions, see Section~\ref{sec:Hermann} for a definition, are well-known examples of hyperpolar actions.
The main result of this article is the following.

\begin{maintheorem}\label{main:Hermann}
An indecomposable hyperpolar action of cohomogeneity greater than one on a Riemannian symmetric space of compact type is orbit equivalent to a Hermann action.
\end{maintheorem}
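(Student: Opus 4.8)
The plan is to reduce the reducible case to the already-understood irreducible one by analysing how the action decomposes along the de~Rham factors. Write $M = M_1 \times \cdots \times M_r$ as a product of irreducible symmetric spaces of compact type, and let the connected group $H$ act hyperpolarly with cohomogeneity at least two. Up to a finite cover and a permutation of isometric factors, the identity component of the isometry group is a product $G = G_1 \times \cdots \times G_r$, so we may view $H$ as a subgroup of $G$ and consider its projections $H_i = \pi_i(H) \subseteq G_i$. First I would establish that each projected action of $H_i$ on $M_i$ is again hyperpolar: a flat section of the $H$-action should project to flat sections in the factors, and the $H_i$-orbits are the images of the $H$-orbits under the equivariant projection. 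This lets me invoke the classification of hyperpolar actions on irreducible symmetric spaces as the base case, so that each $H_i$ is, up to orbit equivalence, transitive, of cohomogeneity one, or a Hermann action.

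Next I would treat the coupling between the factors. Since the action is indecomposable, $H$ is not the product $H_1 \times \cdots \times H_r$; by Goursat's lemma it is a fibre product in which certain factors are tied together through nontrivial common quotients. The first reduction is to dispose of transitive projections: if some $H_i$ acts transitively on $M_i$, then fixing a point $p_i \in M_i$ should show that the $H$-action on $M$ is orbit equivalent to the action of the stabilizer $H_{p_i}$ on the product of the remaining factors, which has strictly fewer irreducible summands. After checking that this reduced action is again indecomposable and hyperpolar of cohomogeneity at least two, I would close this case by induction on $r$, verifying that a Hermann action on the smaller space induces a Hermann action on $M$.

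It then remains to analyse indecomposable couplings in which every projection $H_i$ is non-transitive, hence (by the base case) orbit equivalent to a Hermann or cohomogeneity-one action. Here I expect the main obstacle to lie: I must show that the only fibre-product couplings compatible with hyperpolarity and cohomogeneity at least two are the diagonal ones, in which two isometric factors $M_i \cong M_j$ are linked by a symmetric subgroup embedded diagonally. The key technical input should be that the flat section and its generalized Weyl group of a product action split as products; a nontrivial coupling that preserved polarity would have to respect this splitting together with the slice representations at the singular orbits, and I would compute these slice representations to exclude every coupling that is not of symmetric (diagonal) type. This is the step where the detailed structure theory of the Hermann examples and a careful case analysis enter, and where the cohomogeneity hypothesis is genuinely used, since it is precisely the non-Hermann cohomogeneity-one couplings that must be ruled out.

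Finally I would assemble the pieces: after absorbing all transitive projections and identifying the surviving coupling as a diagonal symmetric one, the resulting group is the fixed point group of an involution of $G$ commuting with the involution defining $M$, so that the action is orbit equivalent to a Hermann action on $M$ in the sense of Section~\ref{sec:Hermann}. Combining this with the inductive reduction of the transitive case completes the proof.
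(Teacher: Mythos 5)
The decisive gap is your transitive-case induction. If the projection action on a factor $M_i$ is transitive, fixing $p_i\in M_i$ gives only a bijection between the orbit \emph{spaces} of the $H$-action on $M$ and of the $H_{p_i}$-action on $\prod_{j\neq i}M_j$; these actions live on manifolds of different dimension, so they are not ``orbit equivalent,'' and recovering the actual $H$-orbits on $M$ requires knowing how $H$ couples $M_i$ to the remaining factors. Your closing step, ``verifying that a Hermann action on the smaller space induces a Hermann action on $M$,'' is precisely this reconstruction, and it is the entire content of the theorem rather than a routine check. It also cannot follow from soft structure theory: the triality actions of $\Spin(8)$ and $\Spin(7)$ on products of spheres (Examples~\ref{eg:SpinEight} and Example~\ref{egSpinSeven}) have transitive projections and hyperpolar intersection actions, yet are \emph{not} orbit equivalent to Hermann actions, so any proof of the lifting step must invoke classification data that distinguishes cohomogeneity one from cohomogeneity $\ge 2$. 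The indispensable tool here is Oni\v{s}\v{c}ik's classification \cite{oniscik} of decompositions $G=G'\cdot G''$ of simple compact Lie groups (Table~\ref{t:Trans}, Proposition~\ref{prop:TransDec}), which pins down exactly which subgroups can have transitive projections and hence which couplings can occur; your proposal never brings in this input, and without it the coupling analysis (your Goursat/slice-representation step) cannot be carried out. In the paper this is the case-by-case work of Section~\ref{sec:TwoFactors}, and for three or more factors one additionally needs the argument of Section~\ref{sec:ManyFactors} showing that the principal orbits are determined by \emph{all} pairwise intersection actions, not by a single stabilizer action.

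There is also a structural inversion in your case split. By Lemma~\ref{lm:indechp}, which rests on the Heintze--Liu splitting theorem \cite{hl}, indecomposability together with hyperpolarity forces \emph{every} projection action to be transitive. Consequently the case you designate as the main obstacle --- all projections non-transitive, to be excluded via Goursat's lemma and slice representations --- is vacuous, while the case you dispose of by induction is the whole theorem. Relatedly, your argument that each projected action is hyperpolar because ``a flat section should project to flat sections'' is not valid: the projection of a section need not meet the projected orbits orthogonally, and the classification of hyperpolar actions on the irreducible factors is not the relevant base case. The correct dichotomy, formalized in Proposition~\ref{prop:NonSplt}, is that the projections are transitive and all nontrivial information sits in the intersection actions of the partial isotropy groups; it is to these intersection actions, not to the projections, that the irreducible classification of \cite{hyperpolar} must be applied.
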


In the special case where the symmetric space is irreducible, this follows from the classification of hyperpolar actions obtained by the author in~\cite{hyperpolar}. Note that the indecomposability of a hyperpolar action does not imply that the space acted on is irreducible, as was pointed out in~\cite{hl}. See Section~\ref{sec:IsoActCpt} on how to construct indecomposable actions with arbitrarily many irreducible factors, see Section~\ref{sec:Examples} for further examples. Theorem~\ref{main:Hermann} implies the following splitting theorem for hyperpolar actions.

\begin{maintheorem}\label{main:Splitting}
Assume the compact connected Lie group~$H$ acts hyperpolarly on the Riemannian symmetric space~$M$ of the compact type. Then, possibly after replacing~$H$ by a larger orbit equivalent group, there are splittings $H= H_1 \times \dots \times H_n$ and $M = M_1 \times \dots \times M_n$ such that the following holds. The $H$-action on~$M$ is orbit equivalent to the product of all $H_i$-actions on the~$M_i$. For each $i \in \{1,\dots,n\}$ the action of~$H_i$ on~$M_i$ is one of the following:
\begin{enumerate}
  \item transitive, in which case $M_i$ is irreducible;
  \item indecomposable and of cohomogeneity one;
  \item an indecomposable Hermann action.
\end{enumerate}
\end{maintheorem}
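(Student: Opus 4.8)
The plan is to reduce Theorem~\ref{main:Splitting} to the indecomposable case settled by Theorem~\ref{main:Hermann}, by first splitting the given action into indecomposable hyperpolar factors and then classifying each factor according to its cohomogeneity. First I would establish the decomposition by induction on $\dim M$. If the $H$-action on $M$ is already indecomposable there is nothing to prove. Otherwise, after replacing $H$ by an orbit equivalent group, there are nontrivial Riemannian product splittings $M = M' \times M''$ and $H = H' \times H''$ for which the $H$-action is the product of the $H'$-action on $M'$ and the $H''$-action on $M''$. Since $M$ is a symmetric space of the compact type, the de Rham decomposition is essentially unique and any such product splitting regroups its irreducible factors; hence $M'$ and $M''$ are again symmetric spaces of the compact type of strictly smaller dimension. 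Moreover a flat section of the product action splits as a Riemannian product $\Sigma = \Sigma' \times \Sigma''$ of flat sections of the two factor actions, so both factor actions are hyperpolar. Applying the inductive hypothesis to each factor and composing the orbit equivalent enlargements yields splittings $H = H_1 \times \dots \times H_n$ and $M = M_1 \times \dots \times M_n$ realising the $H$-action as orbit equivalent to a product of \emph{indecomposable} hyperpolar actions of $H_i$ on $M_i$.

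Next I would classify each indecomposable factor, the action of $H_i$ on $M_i$, by its cohomogeneity $c_i$, using that the cohomogeneity of a product action is the sum of the cohomogeneities of its factors, so that each $c_i$ is well defined. If $c_i > 1$, then Theorem~\ref{main:Hermann} applies directly and shows that the factor is orbit equivalent to a Hermann action; this is case~(iii). If $c_i = 1$, the factor is by construction indecomposable of cohomogeneity one, which is case~(ii). It remains to treat the transitive factors, $c_i = 0$, and to show they force $M_i$ to be irreducible. Here I would replace $H_i$ by the identity component $\mathrm{Isom}_0(M_i)$ of the full isometry group, which still acts transitively and hence is orbit equivalent. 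If $M_i$ were reducible, say $M_i = N_1 \times \dots \times N_r$ with irreducible factors $N_j$, then $\mathrm{Isom}_0(M_i) = \mathrm{Isom}_0(N_1) \times \dots \times \mathrm{Isom}_0(N_r)$ would split as a Riemannian product whose transitive action on $M_i$ is a product action, contradicting the indecomposability of the factor. Therefore $M_i$ is irreducible, which is case~(i).

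The main obstacle I anticipate is the careful bookkeeping around the orbit equivalent enlargements in the decomposition step: one must verify that a hyperpolar action which is decomposable does split compatibly as a product of a group and a symmetric space after such an enlargement, that this splitting interacts correctly with the flatness of the section and with the indecomposability of the resulting factors, and that the inductive enlargements can be composed into a single one. The transitive case is delicate in the same spirit, in that enlarging to the isometry group must be seen to exhibit a genuine product structure precisely when $M_i$ is reducible. All of the substantial geometric content, however, is concentrated in Theorem~\ref{main:Hermann}, on which the present argument rests, so that the remaining work is essentially structural.
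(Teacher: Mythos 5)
Your proposal is correct and follows essentially the route the paper intends: the paper states Theorem~B as an immediate consequence of Theorem~A and leaves precisely this reduction implicit, namely the inductive splitting into indecomposable hyperpolar factors (using that $M$ splits through its de Rham factors and that the factor actions of a hyperpolar product action are again hyperpolar), followed by the cohomogeneity trichotomy with Theorem~A handling cohomogeneity greater than one. Your treatment of the transitive case via $I(M_i)^0 = I(N_1)^0 \times \dots \times I(N_r)^0$ rests exactly on the fact the paper records in Section~2 for compact-type spaces, so the filled-in details match the intended argument.
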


The proof of Theorems~\ref{main:Hermann} and~\ref{main:Splitting} is based on a partial classification of hyperpolar actions on products of two irreducible compact symmetric spaces. See \cite{kl}, \cite{kraly}, \cite{lytchak} for similar recent results on polar actions with non-flat sections. See~\cite{dr} for a survey on polar and hyperpolar actions.

This article is organized as follows. After presenting some preliminary notions and facts, a basic construction for actions on compact symmetric spaces is introduced in Section~\ref{sec:IsoActCpt}. In Section~\ref{sec:HypProd}, some criteria for hyperpolarity, in particular for actions on products of symmetric spaces, are given. In Section~\ref{sec:Hermann}, Hermann actions are defined and indecomposable Hermann actions are classified. In Section~\ref{sec:Trans}, the results of Oni\v{s}\v{c}ik~\cite{oniscik} on transitive actions are reviewed; they are needed in Section~\ref{sec:TwoFactors}, where our main result is proved in the special case of two irreducible factors. In Section~\ref{sec:ManyFactors}, the classification of indecomposable hyperpolar actions from the previous section is generalized to spaces with arbitrarily many irreducible factors, thus proving Theorem~\ref{main:Hermann}. In Section~\ref{sec:Examples}, examples of indecomposable cohomogeneity one actions, which are not orbit equivalent to Hermann actions, are given. Some open questions are stated in the last section.


\section{Preliminaries}
\label{sec:Prelim}


Let $G$ be a connected Lie group and let $K \subseteq G$ be a closed subgroup. The pair $(G,K)$ is called a \emph{symmetric pair} if there exists an involutive automorphism~$\sigma$ of~$G$ such that $(G^\sigma)^0 \subseteq K \subseteq G^\sigma$, where we denote by $G^\sigma$ the set of fixed points of~$\sigma$ and by~$(G^\sigma)^0$ its identity component. If $(G,K)$ is a symmetric pair, we say that $K$ is a \emph{symmetric subgroup} of~$G$. Let $\tilde G$ be the universal cover of~$G$. We say that $K$ is a \emph{locally symmetric subgroup} of~$G$ if there exists a symmetric subgroup $\tilde K \subset \tilde G$ such that $K^0 = p(\tilde K)^0$, where $p \colon \tilde G \to G$ is the covering map.

We use the term \emph{subaction} to refer to the restriction of a Lie group action $G \times M \to M$ to $H \times M$, where $H \subseteq G$  is a closed subgroup. If $M_1$ and $M_2$ are any sets, we always denote by $\pi_i \colon M_1 \times M_2 \to M_i$ the natural projection $(m_1,m_2) \mapsto m_i$ for $i=1,2$. If an action of the group~$G_1$ on the set~$M_1$ and an action of the group~$G_2$ on the set~$M_2$ is given, we define the \emph{product action} of $G_1 \times G_2$ on $M_1 \times M_2$ by $(g_1,g_2) \cdot (m_1,m_2) := (g_1 \cdot m_1, g_2 \cdot m_2)$. For any group $H$, we define the \emph{diagonal subgroup}~${\rm \Delta} H$ of~$H \times H$ by
$
{\rm \Delta} H := \left\{ (h,h) \,\middle\vert\, h \in H \right\}.
$
More generally, if $H_1$ and $H_2$ are two locally isomorphic Lie groups and $H$ is a Lie group such that local isomorphisms $\phi_1 \colon H \to H_1$ and $\phi_2 \colon H \to H_2$ exist, then we say that $ \left\{ (\phi_1(h),\phi_2(h)) \,\middle\vert\, h \in H \right\} $ is a \emph{diagonal subgroup} of $H_1 \times H_2$. If $G$ is a compact Lie group endowed with a biinvariant Riemannian metric, then the connected component of the isometry group is covered by $G \times G$, where the action of an element $(h_1,h_2) \in G \times G$ on~$G$ is given by
\begin{equation}\label{eq:GAct}
(h_1,h_2) \cdot g := h_1 \, g \, h_2^{-1}.
\end{equation}
Henceforth, if $H$ is a connected compact Lie group acting isometrically on a compact Lie group~$G$ with biinvariant metric, we will always assume that $H$ is given by a closed subgroup of~$G \times G$ and, conversely, if $H \subseteq G \times G$ is a closed subgroup it will be understood that the $H$-action on $G$ is given by (\ref{eq:GAct}). We say that isometric actions of the same Lie group on two Riemannian manifolds~$M$ and~$N$ are~\emph{conjugate} if there is an equivariant isometry between $M$ and~$N$. We say that two isometric actions on Riemannian manifolds $M$ and~$N$ are \emph{locally conjugate} if there exists a Riemannian manifold~$V$ with an isometric Lie group action and surjective equivariant local isometries $V \to M$ and $V \to N$. We say that two isometric actions on two Riemannian manifolds $M$ and $N$ are \emph{orbit equivalent} if there is an isometry $M \to N$ which maps each connected component of an orbit in~$M$ to a connected component of an orbit in~$N$. We will denote the isometry group of a Riemannian manifold~$M$ by~$I(M)$ and by $I(M)^0$ its connected component.

\begin{definition}\label{def:ProjIntAct}
Let $M_1$ and $M_2$ be Riemannian manifolds and let $I(M_1)$ and $I(M_2)$ be their isometry groups. Let $H \subseteq I(M_1) \times I(M_2)$ be a closed subgroup.
\begin{enumerate}
\item We define an isometric action of $H$ on one of the factors~$M_i$ by
\[
(h_1,h_2) \cdot p_i := h_i \cdot p_i \quad\hbox{for $(h_1,h_2) \in H$, $p_i \in M_i$.}
\]
We call this the \emph{projection action of $H$ on $M_i$} or the \emph{$H$-action on~$M_i$}.

\item Let $o_1 \in M_1$. Then we define
\[
H_{o_1} := \{ h \in H \mid h \cdot o_1 = o_1 \}.
\]
The subgroup $H_{o_1}$ of~$H$ is called the \emph{partial isotropy group of the $H$-action on~$M$ at~$o_1$}.

\item Let $o_1 \in M_1$ and let $H_{o_1}$ be the partial isotropy group as defined in~(ii). The $H$-action on~$M_2$, restricted to~$H_{o_1} \times M_2$, is called the \emph{intersection action of $H_{o_1}$ on~$M_2$}.

\end{enumerate}
\end{definition}

The orbits of the projection action are exactly the projections of the $H$-orbits on~$M$ to~$M_i$, i.e.\ we have $H \cdot o_i = \pi_i( H \cdot (o_1,o_2) )$ for $i=1,2$, $o_i \in M_i$. The orbits of the intersection action of $H_{o_1}$ on~$M_2$ are given by the intersections of the $H$-orbits on~$M$ with $\{o_1\} \times M_2$, more precisely, we have $\{o_1\} \times (H_{o_1} \cdot o_2) = (H \cdot (o_1,o_2)) \cap (\{o_1\} \times M_2)$ for $o_1 \in M_1, o_2 \in M_2$.

In the special case where the $H$-action on~$M_1$ is transitive, all the partial isotropy groups $H_{o_1}, o_1 \in M_1,$ are conjugate in~$H$ and hence all intersection actions of $H_{o_1}$, $o_1 \in M_1$ on~$M_2$ are conjugate.

Note that for general Riemannian manifolds $M_1,M_2$ the isometry group $I(M_1 \times M_2)$ of the Riemannian product contains $I(M_1) \times I(M_2)$ as a subgroup, but is in general a larger group. For instance, if $M_1$ and $M_2$ are Euclidean spaces, then we have $I(\R^{n+m}) \neq I(\R^n) \times I(\R^m)$ if $m,n \ge 1$. Thus, projection actions and intersection actions are not defined for general products of Riemannian manifolds. However, to study actions of connected Lie groups on reducible symmetric spaces of the compact type, they are useful: Let $M_1$ and $M_2$ be connected Riemannian symmetric spaces whose universal covers are without Euclidean factors. Then the connected component of the isometry group of the Riemannian product $M = M_1 \times M_2$ is given by $I(M)^0 = I(M_1)^0 \times I(M_2)^0$. Hence any connected subgroup~$H$ of $I(M)$ is contained in $I(M_1) \times I(M_2)$ and the projection actions of~$H$ on~$M_i$ are well defined for $i=1,2$.
Since the (hyper-)polarity of an action depends only on the connected component of the group acting, see Remark~\ref{rem:hypliealg} below, we may restrict ourselves to actions of connected Lie groups in the following.

\begin{definition}\label{def:decomp}
We say that an isometric action of a Lie group~$H$ on a product of two Riemannian manifolds $M_1 \times M_2$ \emph{decomposes} if there exist Lie groups $H_1$ and $H_2$ such that $H_i$ acts isometrically on~$M_i$ for $i=1,2$ and the $H$-action is orbit equivalent to the product action of $H_1 \times H_2$ on $M_1 \times M_2$.
We say that an isometric action of a Lie group on a Riemannian manifold $M$ is \emph{decomposable} if there exist Riemannian manifolds $M_1$ and $M_2$ such that $M = M_1 \times M_2$ as a Riemannian product and the action of~$H$ on the product $M_1 \times M_2$ decomposes. Otherwise, we say the action is \emph{indecomposable}.
\end{definition}

The following criterion for the hyperpolarity of an action is well known.

\begin{proposition}\label{prop:PolCrit}
Let $G$ be a connected semisimple compact Lie group, let $\sigma$ be an involutive automorphism of~$G$ and let $K \subset G$ be a closed subgroup such that $(G^\sigma)^0 \subseteq K \subseteq G^\sigma$. Let $\g = {\mathfrak k} \oplus \p$ be the decomposition into eigenspaces of $d\sigma_e$. Let $G/K$ be endowed with the Riemannian metric induced by an $\Ad(G)$-invariant inner product on~$\g$. Let $H \subseteq G$ be a closed subgroup, acting on~$G/$K with cohomogeneity~$d$. Then the $H$-action on~$G/K$ is hyperpolar if and only if $N_{eK}(H \cdot eK) \subseteq \p$ contains a $d$-dimensional abelian subspace.
\end{proposition}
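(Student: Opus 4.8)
The plan is to set up the standard dictionary between complete totally geodesic submanifolds of $G/K$ through the base point $eK$ and Lie triple systems in $\p \cong T_{eK}(G/K)$, and to exploit that such a submanifold is flat exactly when the corresponding Lie triple system $\m$ is abelian. One direction of this last equivalence is clear; for the other, if the totally geodesic submanifold tangent to $\m$ is flat then its curvature vanishes, so $[[\m,\m],\m]=0$, whence $[\m,\m]\subseteq\k$ centralizes $\m$ and a Jacobi-identity computation shows that $\m+[\m,\m]$ is a two-step nilpotent subalgebra of the compact semisimple Lie algebra~$\g$. Since subalgebras of compact Lie algebras are reductive, and a reductive nilpotent Lie algebra is abelian, this forces $[\m,\m]=0$. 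Granting this, and recalling that a hyperpolar action is precisely a polar action admitting a flat section and that every section has dimension equal to the cohomogeneity~$d$, the proposition reduces to the interplay between $d$-dimensional abelian subspaces of~$\p$ and flat sections through~$eK$.

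For the direction asserting that hyperpolarity implies the existence of the abelian subspace, I would start from a flat section~$\Sigma$. Since a section meets every orbit, there is $h\in H$ with $eK\in h\cdot\Sigma$, and $h\cdot\Sigma$ is again a flat section, now passing through~$eK$. Its tangent space $\m:=T_{eK}(h\cdot\Sigma)$ is a $d$-dimensional subspace of~$\p$, it is contained in $N_{eK}(H\cdot eK)$ because the section meets the orbit orthogonally, and it is abelian by the flatness criterion above. This is the required $d$-dimensional abelian subspace.

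For the converse, assume $\La\subseteq N_{eK}(H\cdot eK)$ is abelian with $\dim\La=d$, and set $\Sigma:=\exp(\La)\cdot eK$, a flat, totally geodesic submanifold of dimension~$d$. The conceptual heart of the argument is that orthogonality of~$\Sigma$ to the orbits at the single point~$eK$ propagates to all of~$\Sigma$, and here is where abelianness enters. At a point $q=\exp(A)\cdot eK$ with $A\in\La$, translating back to~$eK$ by the isometry induced by $\exp(-A)$ sends $T_q\Sigma$ to~$\La$ (using $\exp(A+tB)=\exp(A)\exp(tB)$ for $B\in\La$) and the orbit tangent space to the projection of $\Ad(\exp(-A))\h$ onto~$\p$. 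By $\Ad(G)$-invariance of the metric, orthogonality at~$q$ amounts to $\langle\Ad(\exp(A))Y,X\rangle=0$ for all $Y\in\La$, $X\in\h$; but $\Ad(\exp(A))$ fixes~$\La$ pointwise because $[A,Y]=0$, so this is exactly the hypothesis $\La\perp\h$ encoding $\La\subseteq N_{eK}(H\cdot eK)$. Thus $\Sigma$ meets orthogonally every orbit that it meets, and it is flat since $\La$ is abelian.

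The main obstacle is to upgrade this to the statement that $\Sigma$ is a genuine section, i.e.\ that it meets \emph{every} orbit; equivalently, that $\La$ contains a vector~$A$ for which $\exp(A)\cdot eK$ is a regular point. I would argue that $H\cdot\Sigma$ is closed, being the continuous image of the compact set $H\times\Sigma$, and open near any regular point of~$\Sigma$, since there the tangent space of~$\Sigma$ and the orbit tangent space span $T(G/K)$ by the dimension count $d+(\dim M-d)=\dim M$; being open and closed in the connected~$M$, it would then equal~$M$. To locate a regular point on~$\Sigma$ I would use lower semicontinuity of the orbit dimension along~$\La$, together with the fact that the orbit normal spaces along~$\Sigma$ always contain the translate of~$\La$, so that the maximal orbit dimension attained on~$\Sigma$ is $\dim M-d$. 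This last point, which rules out that~$\Sigma$ lies entirely in the singular set, is the delicate step, and it is precisely where one must invoke that $d$ is the \emph{global} cohomogeneity. A more robust alternative is to appeal to the general theory of polar actions, by which a complete, flat, totally geodesic submanifold of dimension equal to the cohomogeneity that meets the orbits orthogonally is a section, yielding hyperpolarity at once.
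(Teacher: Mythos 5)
Your strategy is the standard one, and in fact it is essentially the argument behind the result the paper invokes (the paper gives no proof of Proposition~\ref{prop:PolCrit} at all; it cites Proposition~4.1 of the author's earlier paper \emph{Polar actions on symmetric spaces}). The forward direction is correct and complete: translating a flat section to pass through $eK$, using that sections are totally geodesic of dimension equal to the cohomogeneity, and converting flatness into $[\m,\m]=0$ via your nilpotency argument. Likewise, the propagation computation in the converse, $\langle\Ad(\exp(-A))X,B\rangle=\langle X,\Ad(\exp(A))B\rangle=\langle X,B\rangle$ for $X\in\h$, $B\in\La$, $[A,B]=0$, is exactly the standard computation and is correct.

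The genuine gap is the step you yourself flag: that $\Sigma=\exp(\La)\cdot eK$ meets every orbit. None of the three arguments you offer closes it. First, $H\times\Sigma$ need not be compact: $\exp(\La)$ is in general not a closed subgroup of~$G$, so $\Sigma$ may be a dense, non-closed immersed flat (already for $d=1$ it can be a dense geodesic in a $2$-torus), and your closedness claim for $H\cdot\Sigma$ fails as stated. Second, lower semicontinuity of the orbit dimension only reproduces the upper bound $\dim(H\cdot q)\le\dim M-d$ along~$\Sigma$, which you already have from $T_q\Sigma\subseteq N_q(H\cdot q)$; it does not show this bound is \emph{attained}, i.e.\ it does not exclude that $\Sigma$ lies entirely in the union of lower-dimensional orbits, and without a point of full orbit dimension on~$\Sigma$ the openness of $H\cdot\Sigma$ breaks down. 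Third, the appeal to ``general theory'' is circular: the quotable criteria of Heintze--Palais--Terng--Thorbergsson and Palais--Terng assume that at some point the candidate section's tangent space equals the \emph{whole} normal space of the orbit (equivalently, that it contains a point of maximal orbit dimension), and this is precisely what is in question here, since $eK$ is not assumed regular in the proposition. The gap can be closed as follows. Choose $y\in\Sigma$ maximizing the (integer-valued, bounded) orbit dimension over~$\Sigma$. For small $\xi\in T_y\Sigma$ the point $\exp_y(\xi)$ lies in~$\Sigma$, so by the slice theorem $\dim H\cdot\exp_y(\xi)=\dim H\cdot y+\dim(H_y\cdot\xi)$, and maximality forces $\dim(H_y\cdot\xi)=0$; hence $H_y^0$ fixes $T_y\Sigma$ pointwise. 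The slice representation of $H_y$ on $N_y(H\cdot y)$ has cohomogeneity exactly~$d$, and its cohomogeneity equals the dimension of its fixed subspace plus the cohomogeneity of the complementary summand, which is positive unless that summand is zero; since the fixed subspace contains the $d$-dimensional $T_y\Sigma$, the complementary summand must vanish, so $N_y(H\cdot y)=T_y\Sigma$ and $y$ has maximal orbit dimension $\dim M-d$. From such a point the familiar argument finishes: given any orbit, take $x$ on it nearest to~$y$; the minimizing geodesic $\gamma$ from $y$ to $x$ is orthogonal to that orbit at~$x$, hence, since $\langle\dot\gamma,X^*\rangle$ is constant along geodesics for every Killing field~$X^*$, we get $\dot\gamma(0)\in N_y(H\cdot y)=T_y\Sigma$, so $\gamma$ stays in the complete totally geodesic~$\Sigma$ and $x\in\Sigma$. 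Thus $\Sigma$ meets every orbit and is a flat section.
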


\begin{proof}
See~\cite[Proposition~4.1]{polar}
\end{proof}

\begin{remarks}\label{rem:hypliealg}
It follows from the proposition that the hyperpolarity of an action can be decided on the Lie algebra level. Therefore we may restrict ourselves to consider one representative of each local isometry class of symmetric spaces. For the same reason, we may assume that the groups acting hyperpolarly on symmetric spaces are connected.
\end{remarks}

Since sections of polar actions are totally geodesic submanifolds, it follows that we have
\begin{equation}\label{eq:dim}
\dim(H) \ge \dim(M) - \rk(M)
\end{equation}
whenever a compact Lie group~$H$ acts hyperpolarly on a Riemannian symmetric space~$M$.


\section{Expanding and Reducing Factors}\label{sec:IsoActCpt}


In this section, we introduce a construction which relates certain isometric actions on compact symmetric spaces. The construction described below can be stated in a sort of shorthand notation as follows:
\[
H \curvearrowright M_1 \times  G_2/K_2 \qquad \rightsquigarrow \qquad H \times K_2 \curvearrowright M_1 \times G_2.
\]
i.e.~an action of $H$ on~$M_1 \times  G_2/K_2$ is transformed into an action of $H \times K_2$ on~$M_1 \times G_2$. This construction is motivated by~\cite[Proposition~2.11]{hptt}, where it was observed that, in the special case where $M_1$ is trivial, the $H$-action on $G_2/K_2$ is hyperpolar if and only if the $H \times K_2$-action on~$G_2$ is hyperpolar, cf.~Theorem~\ref{thm:str} below.

There are two types of irreducible symmetric spaces of the compact type, see~\cite{helgason}: The spaces of \emph{Type~I} and \emph{Type~II}. The spaces of Type~I are those with simple isometry group; they are exactly the Riemannian symmetric spaces which have a homogeneous presentation~$G/K$ where $G$ is a simple compact Lie group and $(G,K)$ is a symmetric pair. The spaces of Type~II are the simple compact Lie groups~$L$ endowed with a biinvariant Riemannian metric; their isometry group is finitely covered by $L \times L$ and so they have the homogeneous presentation $(L \times L) / {\rm \Delta} L$.

Let $M$ be a Riemannian symmetric space such that $M = M_1 \times M_2$ where the factors~$M_1$ and~$M_2$ are, not necessarily irreducible, symmetric spaces of the compact type. Let $G = G_1 \times G_2$, where $G_i$ is the universal cover of the connected component of the isometry group of~$M_i$, cf.~\cite[Theorem~8.3.9]{wolf}. We may assume that the Riemannian metric on~$M$ is induced by an $\Ad(G)$-invariant inner product~$\mu$ on~$\g$. Let $H$ be a connected compact Lie group acting isometrically and almost effectively on~$M$. We may assume that $H$ is a connected closed subgroup of~$G$, replacing $H$ by a finite cover, if necessary. Let $o \in M$ be an arbitrary reference point and let $K = G_o$ be its isotropy subgroup with respect to the $G$-action. Define $K_i = K \cap G_i$ for $i = 1, 2$. Let $\g = \k \oplus \p$ as usual. Define $\p_i = \p \cap \g_i$.

We will now define a Lie group action closely related to the $H$-action on~$M$. Roughly speaking, we replace $H$ by~$H \times K_2$ and $M$ by a symmetric space~$\bar M$ such that $\bar M / K_2 \cong M$.
Let
\begin{equation}\label{eq:barM}
\bar M = M_1 \times  G_2.
\end{equation}
Let $G_2$ be endowed with the biinvariant Riemannian metric induced by $\mu|{\g_2 \times \g_2}$. Let $M_1$ be endowed with the invariant Riemannian metric induced by $\mu|{\g_1 \times \g_1}$.
Define the natural projection $\pi \colon \bar M \to M$ by
$
\pi(m_1, g_2) = (m_1, g_2K_2).
$
With our choice of Riemannian metrics, $\pi$ is a Riemannian submersion.
Let
\begin{equation}\label{eq:barH}
\bar H = H \times K_2.
\end{equation}
Define an action of~$\bar H$ on~$\bar M$ as follows:
Let $\bar h = (h_1,h_2,k) \in \bar H$, $m_1 \in M_1$, $g_2 \in G_2$ and let $\bar m = (m_1,g_2)$. Define
\begin{equation}\label{eq:baract}
\bar h \cdot \bar m := (h_1,h_2,k_2) \cdot (m_1, g_2) := (h_1 \cdot m_1, \; h_2 \, g_2 \, k_2^{-1}).
\end{equation}
Note that this action can also be viewed as a subaction of the action of $\bar G = G_1 \times G_2 \times K_2$ on $\bar M$ given by $(g_1,g_2,k_2) \cdot (x, y) := (g_1 \cdot x,\; g_2 \, y \, k_2^{-1})$. Formally, we also consider the $\bar G$-action on~$M$ given by $(g,k_2) \cdot m = g \cdot m$, i.e.\ the action given by the
$G$-action on $M$, where the $K_2$-factor is ignored. With respect to these actions, the map $\pi \colon \bar M \to M$ is equivariant.

\begin{definition}\label{def:Expand}
We say that the $\bar H$-action on~$\bar M$ as introduced above is obtained from the $H$-action on~$M$ by \emph{expanding the factor~$M_2$}. Conversely, if there is a Riemannian symmetric space isometric to~(\ref{eq:barM}), together with an action of a group~$\bar H$ as in~(\ref{eq:barH}), where $\bar H$ acts on~$\bar M$ as described in~(\ref{eq:baract}), then we say that the $H$-action on~$M$ is obtained from the $\bar H$-action on~$\bar M$ by \emph{reducing the factor~$G_2$}. If there exists an $H$-action on a symmetric space~$M$ which is obtained by reducing a factor~$G_2$ in an $\bar H$-action on~$\bar M$, then we say the $\bar H$-action is \emph{reducible}, otherwise we say it is \emph{irreducible}.
\end{definition}

Note that the process of expanding a factor can be repeated arbitrarily many times by expanding any factor of the space~$\bar M$ obtained in the previous step. Since $\dim \bar M = \dim M + \dim K_2$ and $\dim K_2 > 0$, the dimension of the spaces thus generated strictly increases with each step. Further note that if the factor~$M_2$ is a symmetric space of Type~I, then $G_2$ will be of Type~II; if the factor~$M_2$ is a symmetric space of Type~II, then $G_2$ will be the Riemannian product $M_2 \times M_2$; thus we may produce actions on reducible symmetric spaces with arbitrarily many irreducible factors by repeating the process. On the other hand, given any symmetric space $\bar M$ of the compact type, together with a reducible $\bar H$-action, successively reducing factors will lead to an irreducible action in finitely many steps.

Let us remark that the equivariant submersion $\pi \colon \bar M \to M$ induces a bijection between the orbit space of the $H$-action on~$M$ and the orbit space of the $\bar H$-action on~$\bar M$.

\begin{definition}\label{def:grouplift}
Let $M$ be a simply connected Riemannian symmetric space of compact type. Let $M = M_{\rm I} \times M_{\rm II}$ be such that $M_{\rm I}$ is a product of symmetric spaces of Type~I and $M_{\rm II}$ is a product of symmetric spaces of Type~II. Then the action obtained by expanding the factor $M_{\rm I}$ is called the \emph{group lift} of the original action.
\end{definition}

Clearly, if $M_{\rm I}$ is trivial, then the $H$-action on~$M$ is its own group lift. It follows from Theorem~\ref{thm:str} below that an isometric action on a Riemannian symmetric space of the compact type is hyperpolar if and only if its group lift is. Therefore, to study hyperpolar actions on these spaces, we may restrict ourselves to actions on symmetric spaces of Type~II.

\begin{lemma}\label{lm:orbeqex}
Let $M = M_1 \times M_2$, where $M_1$ and $M_2$ are simply connected Riemannian symmetric space of the compact type. Assume $H$ and $L$ are compact connected Lie groups acting isometrically on~$M$. Consider the actions of $\bar H$ and $\bar L$ on~$\bar M$ which are obtained from the $H$-action and the $L$-action by expanding the factor~$M_2$. Then the $H$-action on~$M$ is orbit equivalent to the $L$-action on~$M$ if and only if the $\bar H$-action on~$\bar M$ is orbit equivalent to the $\bar L$-action on~$\bar M$.
\end{lemma}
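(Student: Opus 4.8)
The plan is to exploit the equivariant Riemannian submersion $\pi \colon \bar M \to M$ together with the orbit-space bijection already recorded above, transferring orbit equivalences across $\pi$ in both directions. The starting observation is structural: the subgroup $\{e\} \times K_2 \subseteq \bar H$ (and equally $\subseteq \bar L$) acts on $\bar M = M_1 \times G_2$ by $(m_1,g_2) \mapsto (m_1, g_2 k^{-1})$, its orbits are exactly the fibers of $\pi$, and this $K_2$-action commutes with the left-translation $H$- and $L$-actions on $G_2$. Hence $\bar H \cdot \bar m = \pi^{-1}(H \cdot \pi(\bar m))$ and $\bar L \cdot \bar m = \pi^{-1}(L \cdot \pi(\bar m))$, so $\pi$ carries each connected component of a $\bar H$-orbit onto a connected component of an $H$-orbit, and likewise for $L$. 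The fibers of $\pi$ are the cosets $\{m_1\} \times g_2 K_2$, which are totally geodesic in $\bar M$ since $K_2$ is a closed subgroup of the biinvariant $G_2$. The upshot is the two principles that drive the proof: an isometry of $\bar M$ mapping $\pi$-fibers to $\pi$-fibers descends to an isometry of $M$, and an isometry of $M$ compatible with the splitting $M = M_1 \times M_2$ lifts to $\bar M$.

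For the implication from $M$ to $\bar M$, I would start with an isometry $F \colon M \to M$ sending components of $H$-orbits to components of $L$-orbits and first bring $F$ into a form compatible with the product $M = M_1 \times M_2$, using that $M_1,M_2$ are simply connected of compact type, so that $I(M)^0 = I(M_1)^0 \times I(M_2)^0$. Writing $F = (F_1,F_2)$ with $F_2 \in I(M_2)$ and realizing $F_2$ by an isometry $\tilde F_2$ of $G_2$ covering it through $G_2 \to G_2/K_2 = M_2$ (in the identity component these are left translations, which are manifestly $\pi$-equivariant), I set $\bar F = (F_1,\tilde F_2)$, so that $\pi \circ \bar F = F \circ \pi$. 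Then $\bar F(\bar H \cdot \bar m) = \bar F(\pi^{-1}(H \cdot \pi(\bar m))) = \pi^{-1}(F(H \cdot \pi(\bar m))) = \pi^{-1}(L \cdot m') = \bar L \cdot \bar m'$, and passing to connected components via the first paragraph shows that $\bar F$ is the required orbit equivalence.

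For the converse, given an orbit equivalence $\bar F \colon \bar M \to \bar M$ between the $\bar H$- and $\bar L$-actions, the aim is to produce a \emph{fiber-preserving} orbit equivalence, which then descends along $\pi$ to the desired $F \colon M \to M$; the descended map is an isometry because $\pi$ is a Riemannian submersion and $\bar F$, mapping fibers to fibers, preserves the horizontal distribution. The point to establish is that the common gauge directions, namely the $K_2$-orbits, are respected: since $\{m_1\} \times g_2 K_2$ is a distinguished totally geodesic submanifold sitting inside every $\bar H$-orbit and every $\bar L$-orbit, one argues that $\bar F$ may be modified within orbits so as to send these fibers to fibers without disturbing the orbit correspondence, after which $F(\pi(\bar m)) := \pi(\bar F(\bar m))$ is well defined and sends components of $H$-orbits to components of $L$-orbits.

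The main obstacle lies in exactly this compatibility with the fibration, and it stems from the asymmetry created by expanding only $M_2$: the factor $M_2$ is replaced by $G_2$ while $M_1$ is left intact. One must therefore control isometries that fail to respect the splitting $M_1 \times M_2$ — in particular interchanges of $M_2$ with an isometric factor of $M_1$, which do not lift naively since $G_2$ and $G_2/K_2$ are not isometric — as well as the component group of $I(M_2)$ and the possible disconnectedness of $K_2$. The crux is thus to prove, in the reverse direction, that any orbit equivalence upstairs can be replaced by one preserving the $K_2$-fibration, and, in the forward direction, that $F$ can be chosen product-compatible so that its $M_2$-part lifts to $G_2$; the remainder is the bookkeeping of the previous paragraphs.
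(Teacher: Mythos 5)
Your first paragraph already contains everything the paper actually uses: since $\bar H\cdot\bar m=\pi^{-1}\bigl(H\cdot\pi(\bar m)\bigr)$ and likewise $\bar L\cdot\bar m=\pi^{-1}\bigl(L\cdot\pi(\bar m)\bigr)$, the decomposition of $\bar M$ into $\bar H$-orbits coincides with the decomposition into $\bar L$-orbits if and only if the $H$- and $L$-orbit decompositions of $M$ coincide. That single observation \emph{is} the paper's entire proof: it reads ``orbit equivalent'' here as ``the orbits agree'' and never transports a general isometry across $\pi$. Your proposal instead takes the definition literally — an arbitrary isometry $F$ of $M$, resp.\ $\bar F$ of $\bar M$, carrying orbit components to orbit components — and in that setting the two decisive steps are exactly the ones you leave open: that $F$ may be chosen compatible with the splitting $M_1\times M_2$ with an $M_2$-part that lifts to $G_2$, and that $\bar F$ may be replaced by a $\pi$-fiber-preserving orbit equivalence. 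As written, the proposal is a plan with an acknowledged hole at its center, not a proof.

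Moreover, that hole cannot be filled, because in the generality you are attempting the statement is false; the obstruction is precisely the factor-swapping isometry you flag as the ``main obstacle''. Take $M_1=M_2=\eS^2$ with equal radii, and let $H$ be covered by $\SU(2)\times\U(1)$ and $L$ by $\U(1)\times\SU(2)$ inside $G_1\times G_2=\SU(2)\times\SU(2)$. The $H$-orbits $\eS^2\times \eS^1_t$ (full sphere times latitude circles, or poles) are carried onto the $L$-orbits $\eS^1_t\times \eS^2$ by the swap $(x,y)\mapsto(y,x)$, so the two actions are orbit equivalent. Expanding the factor $M_2$ gives $\bar M=\eS^2\times \SU(2)$ with $K_2=\U(1)$; the $\bar H$-orbits are the sets $\eS^2\times\bigl(\U(1)\,g\,\U(1)\bigr)$, which project \emph{onto} the first de Rham factor, while the $\bar L$-orbits are the sets $\eS^1_t\times\SU(2)$, whose projections to the first factor are circles or points. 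Every isometry of $\bar M$ preserves the de Rham splitting, since $\eS^2$ and the round $\eS^3=\SU(2)$ are irreducible and non-isometric; hence it maps each $\bar H$-orbit component to a set projecting onto $\eS^2$, which can never be a component of an $\bar L$-orbit. So $\bar H$ and $\bar L$ are not orbit equivalent on $\bar M$, even though $H$ and $L$ are on $M$. The lemma is therefore only correct in the narrower reading that the paper's proof actually establishes — orbit equivalence as coincidence of the orbit decompositions (equivalently, via isometries respecting the product and the fibration $\pi$) — and in that reading your first paragraph already constitutes a complete proof, while the remaining three paragraphs of the proposal should be discarded.
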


\begin{proof}
It follows from the above mentioned fact that $\pi$ induces a bijection between orbit spaces that the orbits of the $H$-action on~$M$ agree with the orbits of the $L$-action on~$M$ if and only if the orbits of the $\bar H$-action on~$\bar M$ agree with the orbits of the $\bar L$-action on~$\bar M$.
\end{proof}

\begin{theorem}\label{thm:str}
Let $M_1$ and $M_2$ be simply connected Riemannian symmetric spaces of the compact type and let $H$ be a closed connected subgroup of the universal cover~$G$ of the connected component of the isometry group of~$M = M_1 \times M_2$. Assume the $\bar H$-action on~$\bar M$ is obtained from the $H$-action on~$M$ by expanding the factor~$M_2$. Then the following hold.
\begin{enumerate}

  \item The $\bar H$-action on~$\bar M$ is hyperpolar if and only if the $H$-action on~$M$ is.

  \item The $\bar H$-action on~$\bar M$ is decomposable if and only if the $H$-action on~$M$ is.

  \item If $\pi(\bar o) = o$, then the isotropy groups of~$\bar o$ and~$o$ are isomorphic, i.e.\ $\bar H_{\bar o} \cong  H_o$. Moreover, their slice representations are equivalent.

\end{enumerate}
\end{theorem}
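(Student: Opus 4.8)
The plan is to exploit the equivariant Riemannian submersion $\pi\colon\bar M\to M$ throughout, together with the fact recorded above that $\pi$ induces a bijection of orbit spaces. Since the fibre of $\pi$ through $\bar o$ lies in the $\bar H$-orbit direction, we may translate the basepoint and assume $\bar o=(o_1,e)$ and $o=(o_1,eK_2)$. We view $\bar M=M_1\times G_2$ as the symmetric space $\bar G/\bar K$ with $\bar G=G_1\times G_2\times G_2$ and $\bar K=K_1\times{\rm\Delta}G_2$, so that $\bar\p=\p_1\oplus\{(X,-X)\mid X\in\g_2\}$ and the vertical space at $\bar o$ is $\mathcal V=\{(X,-X)\mid X\in\k_2\}$; its orthogonal complement $\mathcal H$ is mapped isometrically onto $T_oM=\p_1\oplus\p_2$ by $d\pi_{\bar o}$.

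I would begin with~(iii). The isotropy computation is direct: an element $((h_1,h_2),k_2)\in\bar H$ fixes $(o_1,e)$ exactly when $h_1\cdot o_1=o_1$ and $h_2=k_2$, whence the projection $\phi\colon((h_1,h_2),k_2)\mapsto(h_1,h_2)$ is an isomorphism onto $H_o=\{(h_1,h_2)\in H\mid h_1\cdot o_1=o_1,\ h_2\in K_2\}$. For the slice representations, the key observation is that the orbit $\bar H\cdot\bar o$ is saturated with respect to $\pi$: since $\{e\}\times K_2\subseteq\bar H$ acts by right multiplication, $\mathcal V\subseteq T_{\bar o}(\bar H\cdot\bar o)$, so $N_{\bar o}(\bar H\cdot\bar o)\subseteq\mathcal H$ and $d\pi_{\bar o}$ restricts to a linear isometry $N_{\bar o}(\bar H\cdot\bar o)\to N_o(H\cdot o)$. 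Differentiating the equivariance relation $\pi(\bar h\cdot\bar m)=\phi(\bar h)\cdot\pi(\bar m)$ at the fixed point $\bar o$ shows this isometry intertwines the two slice representations via $\phi$, proving they are equivalent.

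For~(i) I would invoke Proposition~\ref{prop:PolCrit} for both $\bar G/\bar K$ and $G/K$ (both $\bar G$ and $G$ are semisimple compact), so that hyperpolarity is equivalent to the normal space containing an abelian subspace of dimension equal to the cohomogeneity; the cohomogeneities agree because $\dim(\bar H\cdot\bar o)=\dim(H\cdot o)+\dim K_2$ and $\dim\bar M=\dim M+\dim K_2$. It remains to match abelian subspaces, and this is the step I expect to be the main obstacle, since $G_2$ and $G_2/K_2$ carry different flat structures (their ranks differ in general). Under $d\pi_{\bar o}$ the horizontal space is $\mathcal H=\p_1\oplus\{(X,-X)\mid X\in\p_2\}$, and a vector $(v_1,v_2)\in\p_1\oplus\p_2$ lifts to $(v_1,(v_2,-v_2))$. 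Using that the two $G_2$-factors of $\bar G$ commute with $G_1$ and with each other, one computes $[(v_1,(v_2,-v_2)),(w_1,(w_2,-w_2))]=([v_1,w_1],([v_2,w_2],[v_2,w_2]))$, which vanishes if and only if $[v_1,w_1]=0$ and $[v_2,w_2]=0$, i.e.\ if and only if $[(v_1,v_2),(w_1,w_2)]=0$ in $\g$. Hence $d\pi_{\bar o}$ carries abelian subspaces of $N_o(H\cdot o)$ bijectively and dimension-preservingly onto abelian subspaces of $N_{\bar o}(\bar H\cdot\bar o)$, so one normal space contains a $d$-dimensional abelian subspace precisely when the other does, giving~(i).

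Finally, for~(ii) I would use the criterion that an action on a product decomposes if and only if each orbit is the product of its two projection-action orbits. Writing $\pi=\id_{M_1}\times\pi_2$ and letting $L_1,L_2$ be the projection actions of $H$ on $M_1,M_2$, a direct computation gives $\pi(\bar H\cdot(m_1,g_2))=H\cdot(m_1,g_2K_2)$ and $\pi\bigl((L_1\cdot m_1)\times(\bar L_2\cdot g_2)\bigr)=(L_1\cdot m_1)\times(L_2\cdot g_2K_2)$, where $\bar L_2=L_2\times K_2$ is the projection action of $\bar H$ on $G_2$. Thus if every $\bar H$-orbit is such a product then, applying $\pi$, every $H$-orbit equals $(L_1\cdot m_1)\times(L_2\cdot m_2)$, so the $H$-action decomposes; conversely, if every $H$-orbit is a product, a short argument using right $K_2$-invariance produces, for each target point, an element of $\bar H=H\times K_2$ reaching it, so every $\bar H$-orbit is a product and the $\bar H$-action decomposes. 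This establishes the equivalence in~(ii).
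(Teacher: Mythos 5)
Your parts (i) and (iii) are correct and follow essentially the paper's own route: the same isotropy-group computation, the identification of $N_{\bar o}(\bar H\cdot\bar o)$ with $N_o(H\cdot o)$ inside the horizontal space of the submersion, and an application of Proposition~\ref{prop:PolCrit}. Your explicit bracket computation in $\g_1\oplus\g_2\oplus\g_2$, showing that abelian subspaces of $\p_1\oplus\p_2$ correspond exactly to abelian subspaces of $\p_1\oplus\{(X,-X)\mid X\in\g_2\}$, makes precise a point the paper leaves implicit, and it is right.

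Part (ii), however, has a genuine gap. Your criterion (an action on a product \emph{decomposes} if and only if every orbit is the product of its two projection orbits) is valid, but only relative to one fixed splitting, so what you actually prove is: the $\bar H$-action decomposes with respect to $\bar M = M_1\times G_2$ if and only if the $H$-action decomposes with respect to $M = M_1\times M_2$. The notion of \emph{decomposable} in Definition~\ref{def:decomp} quantifies over \emph{all} Riemannian product decompositions of the underlying manifold, and two families of decompositions are unaccounted for in your argument. First, decompositions that regroup the irreducible de~Rham factors of $M_1$ and $M_2$; these could be handled by running your computation factor-wise, which is what the paper does via Lemma~\ref{lm:orbeqex} after reducing to the expansion of a single irreducible factor. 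Second, and more seriously, $\bar M$ admits product decompositions that do not project under $\pi$ to any decomposition of $M$: if $M_2$ contains an irreducible factor $L$ of Type~II, then $G_2$ contains $L\times L$, and $\bar M$ can be split so that the two copies of $L$ lie in different factors. For such a splitting your dictionary between $\bar H$-orbits and $H$-orbits breaks down, so the implication ``$\bar H$-action decomposable $\Rightarrow$ $H$-action decomposable'' remains unproven. This is exactly the case to which the paper devotes the last paragraph of its proof of (ii): there one observes that the subgroup $\{1\}\times K_2\subseteq\bar H$ acts by right translations, transitively on each copy of~$L$, so both projection orbits of such a splitting contain a full $L$-factor; hence every $\bar H$-orbit is of the form $X\times Y\times(L\times L)$, every $H$-orbit is of the form $X\times Y\times L$, and the $H$-action is still decomposable (with transitive factor on~$L$). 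Your proof needs this supplementary argument, together with the de~Rham reduction of arbitrary decompositions to partitions of irreducible factors, to establish the equivalence claimed in~(ii).
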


It follows from the statement on the isotropy groups in part~(iii) of Theorem~\ref{thm:str} that the codimension of the orbit $\bar H \cdot \bar o$ in~$\bar M$ equals the codimension of the orbit $H \cdot \pi(\bar o)$ in~$M$. In particular, the $H$-action on~$M$ and the $\bar H$-action on~$\bar M$
are of the same cohomogeneity.

\begin{proof}
We show part~(iii) first. Let us compute the two isotropy groups. Let $\bar o = (m_1, g_2)$ and let
$o = (m_1, m_2)$.
We have
\[
H_o = \left \{ (h_1, h_2) \in H \mid h_j \in g_j K_j g_j^{-1} \mbox{~for~$j = 1,2$} \right \},
\]
where we assume $m_j=g_jK_j$ for $j = 1,2$ and
\[
\bar H_{\bar o} = \left \{ (h_1, h_2, g_2^{-1} h_2 g_2) \in \bar H \mid h_j \in g_j K_j g_j^{-1} \mbox{~for~$j = 1, 2$} \right \}.
\]
An isomorphism $\bar H_{\bar o} \to H_o$ is obviously given by the projection on the first two components.

We may assume $\bar o = ( eK_1, e)$ and $o = ( eK_1, eK_2)$, replacing $\bar H$ and $H$ by conjugate subgroups in~$\bar G$ and $G$, if necessary.
The Lie algebra of~$\bar H$ is
\[
\bar \h = \left \{ (X_1,X_2,Y) \mid (X_1,X_2) \in \h, Y \in \k_i \right \}.
\]
The normal space to the $\bar H$-orbit at~$\bar o$ consists of all elements $(Z_1,Z_2)$ where $Z_1 \in \p_1$, $Z_2 \in \g_2$ and such that $\mu(Z_1,X_1) + \mu(Z_2,X_2) - \mu(Z_2,Y) = 0$ for all $(X_1,X_2) \in \h$ and all $Y \in \k_2$. However, since this condition is equivalent to the condition that $\mu(Z_1,X_1) + \mu(Z_2,X_2) = 0$ for all $(X_1,X_2) \in \h$ and $Z_2 \in \p_2$, we actually have $N_{\bar o} (\bar H \cdot \bar o) = N_o(H \cdot o)$,
using the identifications $T_{\bar o}\bar M = \p_1 \times \g_2$ and $T_o\bar M = \p_1 \times \p_2$.

{}From this it follows that the slice representations of $\bar H_{\bar o}$ on $N_{\bar o} (\bar H \cdot \bar o)$ and of~$H_o$ on $N_o(H \cdot o)$ are equivalent, where we use the isomorphism described in the first part of the proof to identify the two isotropy groups $\bar H_{\bar o}$ and~$H_o$. We have shown~(iii).

In the above situation, we may additionally assume that $\bar o$ is a regular point of the $\bar H$-action on~$\bar M$. It then follows that~$o$ is a regular point of the $H$-action on~$M$ by~(iii). Since we have just shown that the normal spaces of the two actions at~$\bar o$ and $o$ agree, (i) follows from Proposition~\ref{prop:PolCrit}.

We will now prove~(ii). For the purposes of this proof, let us assume that $M = M_1 \times \dots \times M_n$ where the factors are irreducible. It suffices to prove (ii) in the case where the $\bar H$-action on~$\bar M$ is obtained from the $H$-action on~$M$ by expanding the irreducible factor~$M_1$. Assume $\{1, \dots ,n\}$ is the disjoint union of two nonempty sets $I$ and $J$, where $1 \in I$. Let $M_I = \prod_{j \in I} M_j$ and $M_J = \prod_{j \in J} M_j$.

First assume the $H$-action on~$M$ is decomposable in such a way that the $H$-action on~$M$ is orbit equivalent to the product of the action of a Lie group~$H_I$ on~$M_I$ and the action of a Lie group~$H_J$ on~$M_J$. Then by Lemma~\ref{lm:orbeqex}, the $\bar H$-action on~$\bar M$ is orbit equivalent to the product of the action of~$\bar H_I$ on~$\bar M_I$ and the action of ~$H_J$ on~$M_J$.

Now assume the $\bar H$-action on~$\bar M$ is decomposable in such a way that the $\bar H$-action on~$\bar M$ is orbit equivalent to the product of the action of a Lie group~$\bar H_I$ on~$\bar M_I$ and the action of a Lie group~$H_J$ on~$M_J$. Then by Lemma~\ref{lm:orbeqex}, the $H$-action on~$M$ is orbit equivalent to the product of the action of~$H_I$ on~$M_I$ and the action of ~$H_J$ on~$M_J$.

This suffices to prove~(ii) in case $M_1$ is a symmetric space of Type~I, since in this case~$G_1$ is a simple compact Lie group and hence an irreducible symmetric space. However, if $M_1$ is of Type~II, then $G_1$ is a product of two isomorphic simple compact Lie groups and thus the following additional case may arise:
Assume the action of $\bar H = H \times G_1$ on~$\bar M$ is decomposable in such a way that the $\bar H$-action on~$\bar M$ is orbit equivalent to the product of the action of a Lie group~$H_+$ on~$M_+ := \prod_{j \in I \setminus \{1\}} M_j \times G_1$ and the action of a Lie group~$H_-$ on~$M_- := \prod_{j \in J} M_j \times G_1$. In this case the $\bar H$-action on $\bar M$ is orbit equivalent to the product of the two projection actions, namely of the $\bar H$-action on~$M_+$ and the $\bar H$-action on~$M_-$. However, for the first action the orbits are of the form $X \times G_1$, where $X \subseteq M_I$ and for the second action the orbits are of the form $Y \times G_1$, where $Y \subseteq M_J$ , respectively. It follows that all orbits of the $H$-action on~$M$ are of the form $X  \times Y \times M_1$. Hence the $H$-action on~$M$ is decomposable.
\end{proof}


\section{Hyperpolar Actions on Products}\label{sec:HypProd}


\begin{lemma}\label{lm:indechp}
Let $M_1$ and $M_2$ be Riemannian symmetric spaces of the compact type and let $H$ be a closed connected subgroup of the universal cover~$G$ of the connected component of the isometry group of~$M = M_1 \times M_2$.
Assume $H$ acts isometrically on~$M$ and such that the action is hyperpolar and indecomposable. Then both the $H$-actions on $M_1$ and $M_2$ are transitive.
\end{lemma}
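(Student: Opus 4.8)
The plan is to argue by contraposition: assuming the projection action of $H$ on one factor, say $M_1$, fails to be transitive, I would produce a decomposition of the $H$-action. Throughout I work on the Lie algebra level, as permitted by Remark~\ref{rem:hypliealg} and Proposition~\ref{prop:PolCrit}. Write $\g=\g_1\oplus\g_2$, $\k=\k_1\oplus\k_2$, $\p=\p_1\oplus\p_2$ for the splitting induced by $M=M_1\times M_2$, fix a reference point $o=eK$ regular for the $H$-action, and let $A:=N_o(H\cdot o)$ be the normal space, which equals the orthogonal complement of $\h$ in $\g$ intersected with $\p$. By hyperpolarity and Proposition~\ref{prop:PolCrit}, $A$ is an abelian subspace of $\p$ whose dimension is the cohomogeneity. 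Since $[\g_1,\g_2]=0$, a subspace of $\p_1\oplus\p_2$ is abelian if and only if both its coordinate projections are; in particular $A\cap\p_1$ and $A\cap\p_2$ are abelian.

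First I would record two identifications. A direct computation with $\mu$ shows that $A\cap\p_i$ equals the normal space at $o_i=eK_i$ of the projection orbit $H\cdot o_i$ in $M_i$; hence the projection action on $M_i$ is transitive precisely when $A\cap\p_i=0$, and (choosing $o$ so that $o_i$ is also principal for the projection action) $A\cap\p_i$ is an abelian subspace of dimension the cohomogeneity of that action, so each projection action is itself hyperpolar by Proposition~\ref{prop:PolCrit}. Secondly, from the bundle structure of $H\cdot o$ over $H\cdot o_1$ with fibre the intersection orbit $H_{o_1}\cdot o_2$ one gets $\dim(H\cdot o)=\dim(H\cdot o_1)+\dim(H_{o_1}\cdot o_2)$, and comparing this with $\dim(H\cdot o_1)+\dim(H\cdot o_2)=\dim M-\dim(A\cap\p_1)-\dim(A\cap\p_2)$ shows that the principal $H$-orbits are products $(H\cdot o_1)\times(H\cdot o_2)$ exactly when $\dim A=\dim(A\cap\p_1)+\dim(A\cap\p_2)$, i.e.\ when $A=(A\cap\p_1)\oplus(A\cap\p_2)$. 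Since every isometry of a product of symmetric spaces permutes the de Rham factors and hence maps product subsets to product subsets, this last condition is equivalent to the $H$-action being decomposable.

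With these reductions the statement becomes one purely about the flat section: I must show that if the abelian normal space $A$ of a hyperpolar action on $M_1\times M_2$ meets one factor nontrivially, say $A\cap\p_1\neq0$, then $A$ splits as $(A\cap\p_1)\oplus(A\cap\p_2)$. For then the principal orbits are products, the action decomposes, and indecomposability forces $A\cap\p_1=A\cap\p_2=0$, i.e.\ both projections transitive. This splitting of the section is the heart of the matter and the step I expect to be the main obstacle, because an arbitrary abelian subspace of $\p_1\oplus\p_2$ need not respect the product at all; the point is that $A$ is not arbitrary, but the section of a genuine hyperpolar action.

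To establish the splitting I would exploit the geometry of the section. At the regular point $o$ the slice representation of the principal isotropy group $H_o$ on $A$ is trivial, so $H_o$ fixes the totally geodesic flat $\Sigma=\exp_o(A)$ pointwise. For $\xi\in A\cap\p_1$ the geodesic $\exp_o(t\xi)$ stays inside $\Sigma\cap(M_1\times\{o_2\})$ and is normal to every orbit; I would analyze the intersection actions of the partial isotropy groups along this geodesic and show they are orbit equivalent, which is exactly what is needed to conclude that the $M_2$-slice of the orbit is unchanged as one moves in the pure $M_1$-directions of the section. Equivalently, one may argue through the commuting family of shape operators $\{S_\xi : \xi\in A\}$ of the principal orbit: since $[\p_1,\p_2]=0$, every $S_\xi$ with $\xi\in A\cap\p_1$ kills the $\p_2$-part of the tangent space, so the isoparametric curvature distributions are compatible with the splitting $\p=\p_1\oplus\p_2$, and one expects this to force $A=(A\cap\p_1)\oplus(A\cap\p_2)$. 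A further alternative is to use the generalized Weyl group acting on $\Sigma$ as an affine reflection group together with the fact that its walls arise from the root hyperplanes of $M$, which are products. Making any one of these arguments fully rigorous is, in my view, the essential difficulty of the proof.
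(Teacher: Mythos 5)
There is a genuine gap, and moreover the precise claim you reduce the lemma to is false in the stated generality. The preparatory steps are fine: the identification $A\cap\p_i=N_{o_i}(H\cdot o_i)$, the criterion ``the principal orbits are products if and only if $A=(A\cap\p_1)\oplus(A\cap\p_2)$'', and the observation that abelianness of $A$ alone cannot force this splitting are all correct. But that observation \emph{is} the lemma; what you offer for it are three sketches, none carried out, as you yourself concede. The paper resolves exactly this point by an appeal to a deep external result: after passing to the group lift (Theorem~\ref{thm:str}), so that $M$ is a product of Type~II spaces, it invokes the Heintze--Liu splitting theorem \cite[Theorem~B]{hl}, by which indecomposability of the action is equivalent to indecomposability of the Weyl group action on the section. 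This produces a point $g\in M$ (in general a \emph{singular} point, not a regular one) at which the slice representation of $H_g$ on $V=N_g(H\cdot g)$ is \emph{irreducible}. The differentials of the equivariant projections $\pi_i$ restricted to $V$ are then intertwining maps, so their kernels $V\cap\p_{3-i}$ are invariant subspaces, hence $0$ or $V$ by Schur's lemma; the case $V$ makes the action orbit equivalent to a product with one transitive factor, contradicting indecomposability, and $V\cap\p_1=V\cap\p_2=0$ gives transitivity of both projections. Your strategy (c) gestures toward this circle of ideas but never names the theorem that is actually needed; strategies (a) and (b) would amount to reproving a version of that splitting theorem (which is precisely a statement about the curvature distributions of isoparametric submanifolds), not something that can be made rigorous in a paragraph.

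Separately, your reduction itself breaks down because the lemma allows $M_1$ and $M_2$ to be \emph{reducible}, while your key claim --- $A\cap\p_1\neq0$ at a regular point forces $A=(A\cap\p_1)\oplus(A\cap\p_2)$ --- only has a chance when both factors are irreducible. Counterexample: let $N_1,N_2,N_3$ be irreducible, let $H'$ act hyperpolarly and non-transitively on $N_1$, let $H''=\Delta G$ act on $N_2\times N_3=G/K\times G/K$ (an indecomposable Hermann action, Proposition~\ref{prop:indecHer}(iii)), and let $H=H'\times H''$ act on $M=N_1\times N_2\times N_3$, grouped as $M_1=N_1\times N_2$, $M_2=N_3$. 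This action is hyperpolar, its normal space at a regular point is $A=A'\oplus A''$ with $0\neq A'\subseteq\p_{N_1}$ and $A''=\{(P,-P)\}$ diagonal across $\p_{N_2}\oplus\p_{N_3}$, so $A\cap\p_1=A'\neq0$, $A\cap\p_2=0$, and $A\neq(A\cap\p_1)\oplus(A\cap\p_2)$. This does not contradict the lemma --- the action is decomposable, but with respect to the regrouping $N_1\times(N_2\times N_3)$, not with respect to $M_1\times M_2$; decomposability in Definition~\ref{def:decomp} quantifies over \emph{all} product decompositions of $M$, whereas your contrapositive tries to produce a decomposition along the given one. The paper's argument is immune to this because it never attempts to split the section at a regular point: indecomposability enters through \cite{hl} to supply the special point with irreducible slice representation, and the decomposition obtained in the contradiction step is a conclusion there, not something forced factor-by-factor from non-transitivity.
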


\begin{proof}
Using Theorem~\ref{thm:str}, it suffices to prove the lemma for the group lift of the $H$-action on~$M$. Thus we may assume $M$ is a Riemannian product of symmetric spaces of Type~II.

Let $\Sigma \subset M$ be a section of this action. It follows from \cite[Theorem~B]{hl} that the action of the Weyl group $W(\Sigma)$ on~$\Sigma$ is indecomposable. In particular, there is an element $g \in M$ such that the slice representation of the $H$-action at~$g$ is irreducible. After conjugating~$H$, if necessary, we may assume that $g$ is the identity element~$e$ of~$M$. Let $V = N_e(H \cdot e)$ be the normal space to the orbit through~$e$. Then $V$ is the representation space of the slice representation of $H_e$. For $i=1,2$, let $\pi_i \colon M \to M_i$ be the natural projection. Since $\pi_i$ is equivariant with respect to the $H$-actions on $M$ and $M_i$, it follows that its differential $D \pi_i(e)$ restricted to~$V$ becomes an intertwining map with respect to the $H_e$-representation on~$V$. Hence the kernel $V_i$ is an invariant subspace. Since $V$ is an irreducible representation of $H_e$, it follows that $V = V_i$ if we assume $V_i$ is non-trivial, which then implies that $V_{3-i}$ is trivial. Then the $H$-action on $M$ is decomposable, since it is orbit equivalent to the product of two actions one of which is transitive, a contradiction.  This proves that $V_1 = V_2 = 0$ and both projection actions of $H$ on $M_1$ and on~$M_2$ are transitive.
\end{proof}

\begin{lemma}\label{lm:maxsubg}
Assume the compact Lie group~$L$ acts isometrically on a symmetric space~$M$ of compact type. Let $H \subseteq L$ be a closed subgroup such that the $L$-action on~$M$ restricted to~$H$ is hyperpolar and indecomposable. Then the $H$-action on~$M$ is orbit equivalent to the $L$-action or the $L$-action is transitive.
\end{lemma}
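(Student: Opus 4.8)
The plan is to compare the two actions at a single well-chosen point and then to propagate the conclusion along a section. Since $H \subseteq L$, every $H$-orbit lies in an $L$-orbit, so at each point $p$ we have $T_p(H \cdot p) \subseteq T_p(L \cdot p)$ and hence $N_p(L \cdot p) \subseteq N_p(H \cdot p)$. Indecomposability furnishes a good point: arguing exactly as in the proof of Lemma~\ref{lm:indechp}, the hyperpolar $H$-action has a section $\Sigma$ and, by \cite[Theorem~B]{hl}, the Weyl group $W(\Sigma)$ acts indecomposably on $\Sigma$, so there is a point $o \in \Sigma$ at which the slice representation of $H_o$ on $V := N_o(H \cdot o)$ is irreducible. (If the $H$-action is transitive there is nothing to prove, so assume its cohomogeneity is $d \ge 1$.)

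At $o$ we have $H_o \subseteq L_o$, and since $H_o$ preserves the orbit $L \cdot o$ it preserves its normal space; thus $N_o(L \cdot o)$ is an $H_o$-invariant subspace of $V$, and irreducibility forces $N_o(L \cdot o) = 0$ or $N_o(L \cdot o) = V$. In the first case $T_o(L \cdot o) = T_oM$, so $L \cdot o$ is open; being also closed, it is all of the connected manifold $M$ and $L$ acts transitively. In the second case $N_o(L \cdot o) = N_o(H \cdot o)$, equivalently $T_o(H \cdot o) = T_o(L \cdot o)$, so $H \cdot o = L \cdot o$, and it remains to upgrade this to global orbit equivalence.

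To do so I would first note that $L$ is itself hyperpolar. Choosing a point $p$ that is principal for both actions and taking $p = eK$, the inclusion $N_p(L \cdot p) \subseteq N_p(H \cdot p)$, together with the fact that $N_p(H \cdot p) = T_p\Sigma$ is an abelian subspace of $\p$ (the section of the hyperpolar $H$-action is flat and totally geodesic), shows that $N_p(L \cdot p)$ is abelian of dimension $d' := \dim M - \dim(L \cdot p)$; by Proposition~\ref{prop:PolCrit} the $L$-action is hyperpolar, with a section $\Sigma'$ which may be taken to be a flat of dimension $d' \le d$ contained in $\Sigma$. Now let $U := T\Sigma \ominus T\Sigma'$ be the constant $(d-d')$-dimensional subspace of the flat $\Sigma$. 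The crucial geometric point is that $U$ is everywhere tangent to the $L$-orbits along $\Sigma$, so that the intersections $(L \cdot p) \cap \Sigma$ are the parallel affine subflats of $\Sigma$ in the direction $U$. Since $W(\Sigma)$ is induced by elements of $H \subseteq L$, it permutes the $L$-orbits and hence preserves this foliation, so its linear part preserves $U$; as the $W(\Sigma)$-action is indecomposable, hence irreducible, $U = 0$ or $U = T\Sigma$. The latter would give $d' = 0$, i.e.\ $L$ transitive, which is impossible here since $H \cdot o = L \cdot o \ne M$. Hence $U = 0$, so $d' = d$ and $\Sigma' = \Sigma$; for $p$ principal for both actions, $\dim(H \cdot p) = \dim M - d = \dim M - d' = \dim(L \cdot p)$ forces $H \cdot p = L \cdot p$, and since such $p$ are dense while $\{\,p : H \cdot p = L \cdot p\,\}$ is closed (a routine compactness argument), the two actions have the same orbits and are orbit equivalent.

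The main obstacle is precisely the geometric claim used above: that $U$ is tangent to the $L$-orbits all along $\Sigma$, equivalently that the pieces $(L \cdot p) \cap \Sigma$ are parallel subflats and the distribution $p \mapsto T_p(L \cdot p) \cap T_p\Sigma$ on $\Sigma$ is parallel and $W(\Sigma)$-invariant. I expect this to require the hyperpolarity of $L$ together with the fact that $\Sigma' \subseteq \Sigma$ are both flat and totally geodesic, and it is here that indecomposability is indispensable: for a decomposable $H$-action the Weyl group is reducible and intermediate values $0 < d' < d$ genuinely occur.
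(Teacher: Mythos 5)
There is a genuine gap, and you have in fact located it yourself: the assertion that the traces $(L \cdot p) \cap \Sigma$ of the $L$-orbits on the $H$-section $\Sigma$ form a foliation by \emph{parallel} affine subflats in a fixed direction $U$ (equivalently, that $p \mapsto T_p(L\cdot p) \cap T_p\Sigma$ is a globally defined parallel, $W(\Sigma)$-invariant distribution on $\Sigma$) is stated, not proved, and it is precisely the mathematical content of the lemma. It is not a routine consequence of ``$\Sigma' \subseteq \Sigma$ are both flat and totally geodesic'': a priori the sets $(L\cdot p)\cap\Sigma$ need not even be submanifolds of constant dimension, the pointwise complement $T_p\Sigma \ominus N_p(L\cdot p)$ is only defined where $p$ is regular for both actions, and nothing you have written controls how these spaces vary from point to point. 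Everything downstream of this claim (linear parts of $W(\Sigma)$ preserve $U$, the reflection-group splitting forced by an invariant parallel distribution, the density-plus-closedness argument) is fine, and your first step --- producing a point with irreducible slice representation from indecomposability via \cite[Theorem~B]{hl}, exactly as in the proof of Lemma~\ref{lm:indechp}, and concluding $T_o(H\cdot o)=T_o(L\cdot o)$ or $L$ transitive --- is correct, but it only yields orbit coincidence at that single point; the global statement is carried entirely by the unproven claim.

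For comparison: the paper does not reprove this core either. Its proof is a citation --- the argument of \cite[Corollary~3.14]{hptt} goes through verbatim once one knows, by \cite[Theorem~B]{hl}, that indecomposability of the action is equivalent to irreducibility of the associated affine Coxeter group (which is the hypothesis used in \cite{hptt}). The HPTT argument establishes the structure you are missing not by working directly in $\Sigma$, but by lifting both actions to the path space / Hilbert space picture, where the $H$-orbits become homogeneous isoparametric submanifolds and the $L$-orbits a coarser foliation by unions of leaves; the curvature distributions and the irreducibility of the Coxeter group then force the coarser foliation to be trivial or everything. So your outline recovers the right skeleton (irreducible Coxeter group kills any intermediate invariant parallel structure), but the step you flag as ``the main obstacle'' requires this isoparametric machinery and cannot simply be expected to follow from hyperpolarity of $L$; as written, the proof is incomplete.
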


\begin{proof}
This follows with the same argument as in the proof of~\cite[Corollary~3.14]{hptt}, where instead of the indecomposability of the $H$-action it is assumed that the affine Coxeter group associated with the $H$-action is irreducible. It was later proved in \cite[Theorem~B]{hl} that this assumption is equivalent to the indecomposability of the action.
\end{proof}

We will now prove the following characterization of indecomposable hyperpolar actions in terms of projection actions and intersection actions.

\begin{proposition}\label{prop:NonSplt}
Let $M = M_1 \times M_2$ where $M_1$ and $M_2$ are Riemannian symmetric spaces of the compact type. Let $o=(o_1,o_2) \in M_1 \times M_2$. Assume the compact Lie group~$H$ acts isometrically on~$M$ and such that the action is indecomposable. Then the action is hyperpolar if and only if all of the following hold.
\begin{enumerate}
  \item The $H$-action on $M_1$ is transitive.
  \item The $H$-action on $M_2$ is transitive.
  \item The $H_{o_1}$-action on $M_2$ is hyperpolar.
  \item The $H_{o_2}$-action on $M_1$ is hyperpolar.
\end{enumerate}
\end{proposition}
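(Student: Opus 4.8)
The plan is to reduce the whole statement to the Lie algebra criterion of Proposition~\ref{prop:PolCrit} and to exploit the single structural fact that the two factors of a product commute. By Remark~\ref{rem:hypliealg} we may pass to the Lie algebra level and assume $M = G/K$ with $G = G_1 \times G_2$ semisimple compact, $K = K_1 \times K_2$, and a metric induced by an $\Ad(G)$-invariant inner product. Write $\g = \k \oplus \p$ and $\p = \p_1 \oplus \p_2$ with $\p_i = \p \cap \g_i$, and record the decisive fact $[\g_1,\g_2] = 0$. I would fix a base point $o = (o_1,o_2)$ which is \emph{regular} for the $H$-action; using transitivity along $M_1$ one may move any regular point into $\{o_1\} \times M_2$, so such an $o$ exists with prescribed first coordinate, and by transitivity the partial isotropy groups at different points are conjugate, so the hyperpolarity assertions in (iii), (iv) are independent of the chosen base point. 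Throughout I identify $T_oM = \p$ and view $N_o(H \cdot o) \subseteq \p_1 \oplus \p_2$.

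Two preliminary observations drive both directions. First, a direct computation from the definition of the normal space shows that the projection $\p \to \p_2$ maps $N_o(H \cdot o)$ into $N_{o_2}(H_{o_1} \cdot o_2)$: the defining condition for $N_o(H \cdot o)$, restricted to those elements of $\h$ that fix $o_1$ (hence have vanishing $\p_1$-component), is precisely the defining condition for $N_{o_2}(H_{o_1} \cdot o_2)$; symmetrically for $\p \to \p_1$ and $N_{o_1}(H_{o_2} \cdot o_1)$. Second, transitivity of $H$ on $M_1$ is equivalent to $N_{o_1}(H \cdot o_1) = 0$, i.e.\ to $N_o(H \cdot o) \cap \p_1 = 0$, so that the projection to $\p_2$ is \emph{injective} on $N_o(H \cdot o)$, and symmetrically for $\p_1$. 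A short dimension count using transitivity shows that the cohomogeneities of the $H$-action on $M$, of the intersection action of $H_{o_1}$ on $M_2$, and of the intersection action of $H_{o_2}$ on $M_1$ all coincide (each equals $\dim M - \dim H + \dim H_o$), and that $o_2$, resp.\ $o_1$, is a regular point of the respective intersection action. Call this common number $c$.

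For the forward implication I would first invoke Lemma~\ref{lm:indechp} to deduce (i) and (ii) from indecomposability together with hyperpolarity. For (iii): since the action is hyperpolar and $o$ is regular, Proposition~\ref{prop:PolCrit} forces $N_o(H \cdot o)$ to be abelian (it contains a $c$-dimensional abelian subspace and itself has dimension $c$). Its image under $\p \to \p_2$ is again abelian, because for $Z,W \in N_o(H \cdot o)$ one has $[Z,W] = ([Z_1,W_1],[Z_2,W_2])$ with $[Z_1,W_1] \in \k_1$ and $[Z_2,W_2] \in \k_2$ in complementary summands, so a vanishing bracket forces both components to vanish. By injectivity this image is $c$-dimensional and lies in $N_{o_2}(H_{o_1} \cdot o_2)$; since $c$ is also the cohomogeneity of the intersection action, Proposition~\ref{prop:PolCrit} yields that $H_{o_1}$ acts hyperpolarly on $M_2$. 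Statement (iv) is symmetric.

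For the converse I would start from (i)--(iv) with $o$ regular. By (iii) and regularity of $o_2$, the entire space $N_{o_2}(H_{o_1} \cdot o_2)$ is a $c$-dimensional abelian subspace $\mathfrak a_2 \subseteq \p_2$; by the first observation the $\p_2$-component of every element of $N_o(H \cdot o)$ then lies in $\mathfrak a_2$. Symmetrically, (iv) provides an abelian $\mathfrak a_1 \subseteq \p_1$ containing all $\p_1$-components of $N_o(H \cdot o)$. Now for $u,v \in N_o(H \cdot o)$ we again have $[u,v] = ([u_1,v_1],[u_2,v_2])$ since $[\g_1,\g_2] = 0$, and both brackets vanish because $u_i,v_i \in \mathfrak a_i$. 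Hence $N_o(H \cdot o)$ is an abelian subspace of dimension $c$, and Proposition~\ref{prop:PolCrit} gives hyperpolarity of the $H$-action on $M$.

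The routine but delicate point, which I would isolate beforehand, is the dimension bookkeeping: verifying that transitivity makes the three cohomogeneities equal, and that a regular point of the $H$-action restricts to a regular point of each intersection action, so that the normal spaces $N_{o_2}(H_{o_1}\cdot o_2)$ and $N_{o_1}(H_{o_2}\cdot o_1)$ I project into actually have dimension $c$ and are therefore abelian, rather than merely \emph{containing} a $c$-dimensional abelian subspace. Once regularity is pinned down, the entire geometric content is carried by the fact $[\g_1,\g_2] = 0$, which lets ``abelian'' pass back and forth between the product and its factors; this is the conceptually central step, while all base-point changes are handled uniformly by the conjugacy of partial isotropy groups that transitivity supplies.
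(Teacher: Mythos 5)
Your proposal is correct and takes essentially the same route as the paper's proof: both directions reduce to Proposition~\ref{prop:PolCrit} at a regular base point (reached via transitivity and conjugacy of the partial isotropy groups), obtain (i) and (ii) from Lemma~\ref{lm:indechp}, and pass abelian subspaces between $N_o(H\cdot o)$ and its projections using the componentwise bracket structure coming from $[\g_1,\g_2]=0$. The only minor difference is that the paper asserts the equality $\pi_i(N_o(H\cdot o)) = N_{o_i}(H_{o_{3-i}}\cdot o_i)$, whereas you work only with the inclusion, which indeed suffices for both implications.
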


\begin{proof}
Let $G$ and $K$ be as in Proposition~\ref{prop:PolCrit} and let $\p = \p_1 \oplus \p_2$, according to the splitting $M=M_1 \times M_2$. Let $d$ be the cohomogeneity of the $H$-action on~$M$. We may assume that $H$ is a closed subgroup of~$G$. Let $\nu \subset \p$ be the normal space $N_o(H \cdot o)$ and let $\nu_1 = \pi_1(\nu)$ and $\nu_2=\pi_2(\nu)$. We have $\nu_1 = N_{o_1} (H_{o_2} \cdot o_1)$ and $\nu_2 = N_{o_2} (H_{o_1} \cdot o_2)$. In case the actions of~$H$ on $M_1$ and on~$M_2$ are both transitive, we have that $\pi_i$ induces a linear isomorphism $\nu \to \nu_i$ for $i=1,2$.

Assume first the $H$-action on~$M$ is hyperpolar and indecomposable. By Lemma~\ref{lm:indechp}, both the $H$-action on~$M_1$ and the $H$-action on~$M_2$ are transitive, i.e.\ (i) and (ii) hold. After conjugating $H$, if necessary, we may assume $o=eK$. It follows from Proposition~\ref{prop:PolCrit} that~$\nu$, and hence both $\nu_1$ and $\nu_2$, contain a $d$-dimensional abelian subspace. Since $H$ acts transitively on~$M_1$ as well as on~$M_2$, the conjugacy classes of the isotropy subgroups $H_{o_1}$ and $H_{o_2}$ do not depend on the points $o_1$ or $o_2$. This shows that the $H_{o_1}$-action on~$M_2$ and the $H_{o_2}$-action on~$M_1$
are hyperpolar for arbitrary $o_1 \in M_1$ and $o_2 \in M_2$. In particular,
(iii) and (iv) hold.

Now assume (i)--(iv) hold. Since by assumption $H$ acts transitively on~$M_1$ and on~$M_2$, the conjugacy classes of the isotropy subgroups $H_{o_1}$ and $H_{o_2}$ do not depend on the points $o_1$ or $o_2$. Therefore, we may assume that $o$ is a regular point of the $H$-action on~$M$ by conjugating~$H$ in~$G$, if necessary. Then (iii) and (iv) still hold and $\nu_1$ and $\nu_2$ are $d$-dimensional abelian subspaces of~$\p_1$ and $\p_2$ by Proposition~\ref{prop:PolCrit}. This shows that $\nu \subset \p$ is a $d$-dimensional abelian subspace and it follows that the $H$-action on~$M$ is hyperpolar by Proposition~\ref{prop:PolCrit}.
\end{proof}

\begin{remark}\label{rem:NonSplt}
Note that if the equivalence in the statement of Proposition~\ref{prop:NonSplt} holds, then the $H$-action on~$M$, the $H_{o_1}$-action on~$M_2$ and the $H_{o_2}$-action on~$M_1$ all have the same cohomogeneity.
\end{remark}


\section{Hermann Actions}
\label{sec:Hermann}


The actions we will describe in this section were introduced by Hermann~\cite{hermann} as examples of variationally complete actions. It was shown by Conlon~\cite{conlon} that hyperpolar actions are variationally complete and much later by Gorodski and Thorbergsson~\cite{gt} that an isometric action on a compact symmetric space is variationally complete if and only if it is hyperpolar.

\begin{definition}\label{def:Her}
Let $M$ be a Riemannian symmetric space of the compact type. Let $G$ be the isometry group of~$M$. If $H \subset G$  is a locally symmetric subgroup of~$G$ then the action of~$H$ on~$M$ is called a \emph{Hermann action}.
\end{definition}

Let $G$ be a semisimple compact Lie group and assume $G=G_1 \times \dots \times G_n$ where the $G_i$ are simple. Let $\alpha$ be an involutive automorphism of~$G$. Then there is a self-inverse permutation $\pi = \pi_\alpha$ of the set $\{1, \dots, n\}$ and isomorphisms $\alpha_i \colon G_i \to G_{\pi(i)}$ such that
\begin{equation}\label{eq:InvAut}
\alpha(g_1, \dots, g_n) = (\alpha_{\pi(1)}(g_{\pi(1)}),\dots,\alpha_{\pi(n)}(g_{\pi(n)}))
\end{equation}
and we have $\alpha_i = \alpha_j^{-1}$ whenever $\pi(i) = j$.

\begin{remark}\label{rem:swap}
Let $G$ be a Riemannian symmetric space of Type~II, i.e.\ a simple compact Lie group, equipped with a biinvariant Riemannian metric. Then the action of $G \times G$ on~$G$ given by $(h,k) \cdot g = h \, g \, k^{-1}$ is conjugate to the action of $G \times G$ on~$G$ given by $(h,k) \cdot g = k \, g \, h^{-1}$. In fact, the inversion map $g \mapsto g^{-1}$ is an equivariant isometry.
\end{remark}

\begin{remark}\label{rem:autom}
Similarly, if $G$ is a Riemannian symmetric space of Type~II and $\alpha$ is an automorphism of~$G$, then the action of $G \times G$ on~$G$ given by $(h,k) \cdot g = h \, g \, k^{-1}$ is conjugate to the action of $G \times G$ on~$G$ given by $(h,k) \cdot g = \alpha(h) \, g \, \alpha(k)^{-1}$. Indeed, an equivariant isometry is given by the map $\alpha \colon G \to G$.
\end{remark}

\begin{proposition}\label{prop:indecHer}
An indecomposable Hermann action on a Riemannian symmetric space of the compact type is locally conjugate to one of the following actions:
\begin{enumerate}\def\theenumi{\roman{enumi}}

 \item\label{it:hkong} the action of~$H \times L^{n-1} \times K$ on~$L^n$, defined by
      \begin{align*}
      (h,&g_1,\dots,g_{n-1},k) \cdot (x_1,\dots,x_n) = \\ &= (h \, x_1 \, g_1^{-1},\; g_1 \, x_2 \, g_2^{-1},\; \dots\; ,\; g_{n-2} \, x_{n-1} \, g_{n-1}^{-1},\; g_{n-1} \, x_n \, k^{-1}),
      \end{align*}

 \item\label{it:hongk} the action of~$H \times L^{n-1}$ on~$L^{n-1} \times L/K$ obtained from~(\ref{it:hkong}) by reducing the last factor,

 \item\label{it:onhgk} the action of $L^{n-1}$ on $L/H \times L^{n-2} \times L/K$ obtained from~(\ref{it:hkong}) by reducing the first and the last factor,

 \item\label{it:sigma} the action of~$L^n$ on~$L^n$, defined by
      \begin{align*}
      (g_1,&\dots,g_n) \cdot (x_1,\dots,x_n) = \\ &= (g_1 \, x_1 \, g_2^{-1},\; g_2 \, x_2 \, g_3^{-1},\; \dots\; ,\; g_{n-1} \, x_{n-1} \, g_n^{-1},\; g_n \, x_n \, \sigma(g_1)^{-1}),
      \end{align*}

\end{enumerate}
where $L$ is a simply connected simple compact group, $H$ and $K$ are locally symmetric subgroups of~$L$ and $\sigma$ is an outer or trivial automorphism of~$L$. Conversely, the actions above are all indecomposable.
\end{proposition}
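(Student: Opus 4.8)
The plan is to encode a Hermann action by a pair of involutions and to read off the four normal forms from the combinatorics of how these two involutions permute the simple factors. First I would pass to the group lift (Definition~\ref{def:grouplift}). By Theorem~\ref{thm:str}(ii), applied to the successive expansions that realize it, the action is indecomposable if and only if its group lift is, so it suffices to classify the group lifts and then recover the general case by reducing factors. Here I would check that the group lift of a Hermann action is again a Hermann action: writing $M = G/G^\sigma$ with $G$ the universal cover of $I(M)^0$ and $H = G^\tau$ for involutions $\sigma,\tau$, expanding all Type~I factors replaces each $G_i/K_i$ by the group $G_i$ and adjoins $K_i = G_i^{\sigma_i}$ to $H$; the adjoined right-translation factors assemble with $\tau$ into a single involution $\bar\tau$ of the isometry group of $\bar M$, so that $\bar H$ remains locally symmetric. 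Thus I may assume $M$ is a product of simple compact Lie groups, so that $G = \prod_{a=1}^{N} G_a$ with $G_a$ simple, $K = G^\sigma$ for the involution $\sigma$ swapping the two factors inside each Type~II block, and $H = G^\tau$.

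Next I would apply~\eqref{eq:InvAut} to $\sigma$ and $\tau$, obtaining self-inverse permutations $\pi_\sigma,\pi_\tau$ of $\{1,\dots,N\}$ together with the identifying isomorphisms of simple factors. The group $\langle \pi_\sigma,\pi_\tau\rangle$ is dihedral, and I claim its orbit decomposition of $\{1,\dots,N\}$ induces a decomposition of the action: any union of orbits is invariant under both $\sigma$ and $\tau$, hence yields a splitting $G = G' \times G''$ respected by $\sigma$ and $\tau$, and therefore splittings $M = M' \times M''$ and $H = H' \times H''$ exhibiting the action as a product in the sense of Definition~\ref{def:decomp}. Consequently an indecomposable action forces $\langle \pi_\sigma,\pi_\tau\rangle$ to act transitively, and the identifying isomorphisms then show that all $G_a$ are isomorphic to a single simple group $L$; this is where the common factor $L$ in the statement comes from.

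The heart of the argument is the structure of a single orbit of two involutions. Since $\sigma$ is the swap it has no fixed factors, so every $G_a$ carries a $\sigma$-edge to its partner; drawing $\sigma$- and $\tau$-edges, a transitive orbit is an alternating path or an alternating cycle on the $N = 2n$ factors. In the path case the two degree-one endpoints cannot be $\sigma$-fixed, hence must be the $\tau$-fixed factors, where $\tau$ restricts to an involution of $L$ whose fixed subgroup is a symmetric subgroup; choosing identifications $G_a \cong L$ that trivialize all edge-isomorphisms along the simply connected path leaves precisely two symmetric subgroups $H$ and $K$ at the ends, and writing out the induced action reproduces the normal form~(\ref{it:hkong}). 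In the cycle case $\tau$ has no fixed factors; trivializing all edge-isomorphisms but one leaves a single residual monodromy automorphism of $L$, which by Remark~\ref{rem:autom} may be altered by inner automorphisms absorbed into the free group factors and so reduced to an outer or trivial automorphism $\sigma$, reproducing the normal form~(\ref{it:sigma}).

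Finally, the general non-Type~II indecomposable Hermann actions are obtained by reducing factors of~(\ref{it:hkong}) and~(\ref{it:sigma}). In~(\ref{it:hkong}) only the two end factors are acted on by a proper symmetric subgroup ($H$ on the left of $x_1$, $K$ on the right of $x_n$); using Remark~\ref{rem:swap} to interchange left and right, reducing one end gives~(\ref{it:hongk}) and reducing both gives~(\ref{it:onhgk}), while~(\ref{it:sigma}) admits no nontrivial reduction since every factor is acted on by the full group $L$ on both sides. For the converse one must verify that~(\ref{it:hkong})--(\ref{it:sigma}) are indecomposable; by Theorem~\ref{thm:str}(ii) it suffices to treat~(\ref{it:hkong}) and~(\ref{it:sigma}), for which no splitting of the factor set respects both involutions, so no product decomposition exists, in agreement with the irreducibility of the associated Coxeter group, cf.~\cite{hl}. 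I expect the main difficulty to lie in the third paragraph: carefully trivializing the chains of identifying isomorphisms and, in the cyclic case, isolating the residual monodromy and proving that it is exactly the outer-or-trivial twist $\sigma$ appearing in~(\ref{it:sigma}).
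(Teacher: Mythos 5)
Your proof is correct in substance, and its combinatorial core coincides with the paper's: both encode the Hermann action by a pair of involutions, pass to the permutations of~(\ref{eq:InvAut}), show that indecomposability forces the associated graph on the simple factors to be connected, hence an alternating path or cycle on copies of one simple group~$L$, and normalize the identifying isomorphisms via Remarks~\ref{rem:swap} and~\ref{rem:autom}. The genuine difference is the treatment of Type~I factors. The paper never passes to the group lift; it runs the graph argument on $M = G/G^\rho$ directly, allowing vertices fixed by~$\pi_\rho$, so the path case splits into subcases according to whether each endpoint is $\pi_\tau$-fixed or $\pi_\rho$-fixed, which yields (\ref{it:hkong}), (\ref{it:hongk}) and (\ref{it:onhgk}) immediately, while cycles give (\ref{it:sigma}). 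You instead classify only in the Type~II setting, where the swap involution has no fixed vertices and only (\ref{it:hkong}) and (\ref{it:sigma}) occur, and then recover (\ref{it:hongk}) and (\ref{it:onhgk}) by enumerating reductions of (\ref{it:hkong}). Your route makes the graph analysis cleaner (two cases instead of several endpoint configurations) and reuses the expansion machinery (Theorem~\ref{thm:str}, Lemma~\ref{lm:orbeqex}, Proposition~\ref{prop:HermExp}) systematically; the price is the reduction step, which is where your argument is thinnest and which the paper's direct route avoids entirely. Concretely, you need (a) that a local conjugacy of group lifts transports reduction data, which uses that equivariant isometries respect the de~Rham splitting into irreducible factors, and (b) that form~(\ref{it:hkong}) admits no reduction other than by the full end groups $H$ and~$K$ -- in particular you must exclude reducing by a proper locally symmetric direct factor $K'$ of~$K$, which would produce $H \times L^{n-1} \times K''$ acting on $L^{n-1} \times L/K'$, an action not on your list. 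Point~(b) does hold: if $K = K' \times K''$ with $K'$ locally symmetric and positive-dimensional, then $K''$ centralizes $(K')^0$, and the centralizer of a symmetric subgroup of a simple compact group lies in that subgroup's own center, forcing $K''$ to be finite; but this deserves a sentence in a complete write-up. Finally, your converse (the listed actions are indecomposable) is argued at the same level of detail as the paper's, which likewise leaves the implication that a connected graph gives an indecomposable action essentially to the reader, to be settled via~\cite{hl}.
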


\begin{proof}
We may assume that $M$ is a simply connected Riemannian symmetric space of the compact type. Let $G$ be the connected component of the universal cover of the isometry group of~$M$. Let $\rho$ be an involutive automorphism of~$G$ such that $M = G/G^\rho$ up to coverings. We may assume the presentation $G/G^\rho$ is almost effective, i.e.\ the Lie algebra of~$G^\rho$ does not contain non-trivial ideals of~$\g$.

Furthermore, we may assume that the Hermann action on~$M$ is given by~$G^\tau$, where $\tau$ is an involutive automorphism of~$G$. Since the Hermann action is indecomposable, it follows that also the Lie algebra of~$G^\tau$ does not contain non-trivial ideals of~$\g$.

Define a graph as follows. The vertices are the simple factors $G_1, \dots, G_m$ of~$G$. Let $\pi_\rho$ and $\pi_\tau$ be the permutations associated with the involutive automorphisms~$\rho$ and $\tau$ as in~(\ref{eq:InvAut}). Two vertices $G_i$ and $G_j$, $i \neq j$, are connected by two edges if $\pi_\rho(i)=j$ and $\pi_\tau(i)=j$. Two vertices $G_i$ and $G_j$, $i \neq j$, are connected by one edge if either $\pi_\rho(i)=j$ or $\pi_\tau(i)=j$.  Two vertices $G_i$ and $G_j$ are connected by zero edges if $i=j$ or $\pi_\rho(i) \neq j \neq \pi_\tau(i)$.

If this graph is disconnected, then the Hermann action is decomposable; indeed, the Hermann action can then be written as a product action where the factors correspond to the connected components of the graph.

Now assume the action is indecomposable, i.e.\ the graph defined above is connected. Then it follows that all simple factors of~$G$ are isomorphic to a simply connected simple compact Lie group~$L$ and hence $G \cong L^m$. It remains to show that the $H$-action is locally conjugate to one of the actions (i) through~(iv). Since any vertex of the graph is connected with any other vertex by at most two edges, it is either a path graph, a cycle graph, or it consists of two vertices connected by two edges.

Let us first assume it is a path graph. By renumbering the nodes, we may assume that the two terminal vertices of the graph are $G_1$ and $G_m$ and that $G_i$ is connected with $G_{i+1}$ by one edge for $1 \le i \le m-1$.
We have case~(\ref{it:hkong}) with $m=2n$ if $\pi_\tau(1)=1$ and $\pi_\tau(m)=m$; we have case~(\ref{it:onhgk}) with $m=2n-2$ if $\pi_\rho(1)=1$ and $\pi_\rho(m)=m$. If either
$\pi_\tau(1)=1$ and $\pi_\rho(m)=m$ or $\pi_\rho(1)=1$ and $\pi_\tau(m)=m$, we have case~(\ref{it:hongk}) with $m=2n-1$, see also Remark~\ref{rem:swap}. Note that we may assume the isomorphisms $\rho_i$ with $\pi_\rho(i) \neq i$ and $\tau_i$ with $\pi_\tau(i) \neq i$ are all equal to the identity map of~$L$ by using Remark~\ref{rem:autom}.

By a similar argument, it follows that the Hermann action is locally conjugate to one of the actions as described in~(\ref{it:sigma}) with $m=2n$ in the case of a cycle graph or two vertices connected by two edges. By renumbering the nodes, we may assume that $G_i$ is connected with $G_{i+1}$ by an edge for $1 \le i \le m-1$ and that $G_m$ is connected with $G_1$. Using Remark~\ref{rem:autom}, we may assume that the isomorphisms $\rho_i, i=1,\dots,m,$ and $\tau_i, i=1,\dots,m-1$, are equal to the identity map of~$L$. By conjugation of the group acting, we may assume $\tau_m = \id_L$ if $\tau_m$ is an inner automorphism of~$L$.
\end{proof}

\begin{remarks}\label{rem:HerSigma}
The actions described in Proposition~\ref{prop:indecHer}~(\ref{it:sigma}) were called ``$\sigma$-actions'' in~\cite{hptt} in the special case $n=1$.
The irreducible (in the sense of Definition~\ref{def:Expand}) Hermann actions on symmetric spaces of compact type are the following cases in Proposition~\ref{prop:indecHer}:
Case~(ii) with~$n=1$, i.e.\ the Hermann actions on irreducible symmetric spaces of Type~I;
Case~(iii) with~$n=2$, i.e.\ the action of $\Delta G \subset G \times G$ on the product $G/K \times G/L$ of two irreducible symmetric spaces of Type~I with locally isomorphic isometry group;
Case~(iv) with~$n=1$, i.e.\ $\sigma$-actions on simple compact Lie groups.
The actions given in Case~(i) are never irreducible.
\end{remarks}

\begin{proposition}\label{prop:HermExp}
Let $M_1$ and $M_2$ be Riemannian symmetric spaces of the compact type and let $M = M_1 \times M_2$. Let $H$ be a closed connected subgroup of $G = G_1 \times G_2$, where $G_i$ is the connected component of the universal cover of the isometry group of~$M_i$. Assume the $\bar H$-action on~$\bar M$ is obtained from the $H$-action on~$M$ by expanding the factor~$M_2$. Then the $H$-action on~$M$ is a Hermann action if and only if the $\bar H$-action on~$\bar M$ is.
\end{proposition}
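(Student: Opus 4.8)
The plan is to verify the equivalence at the level of Lie algebras. By the definition of a locally symmetric subgroup and Remark~\ref{rem:hypliealg}, the $H$-action on $M$ is a Hermann action if and only if $\h$ is the fixed-point subalgebra $\g^{\tau}$ of some involutive automorphism $\tau$ of $\g$, and likewise the $\bar H$-action on $\bar M$ is a Hermann action if and only if $\bar\h$ is the fixed-point subalgebra of an involutive automorphism of $\bar\g$. First I would fix the relevant identifications. Writing $M_2 = G_2/K_2$ and letting $\sigma_2$ be an involution of $\g_2$ with $\mathfrak k_2 = \g_2^{\sigma_2}$, the isometry algebra of $\bar M = M_1 \times G_2$ is $\bar\g = \g \oplus \g_2'$, where $\g = \g_1 \oplus \g_2$ is the isometry algebra of $M$ and $\g_2' \cong \g_2$ is the extra copy of $\g_2$ arising from $I(G_2)^0$ being covered by $G_2 \times G_2$. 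Under the embedding of $\bar H = H \times K_2$ determined by~(\ref{eq:baract}), the subalgebra $\bar\h$ equals $\h \oplus \mathfrak k_2'$, with $\h \subseteq \g$ and $\mathfrak k_2' \subseteq \g_2'$ the copy of $\mathfrak k_2$ in the extra factor.

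For the forward implication, given an involution $\tau$ of $\g$ with $\g^\tau = \h$, I would set $\bar\tau = \tau \oplus \sigma_2$ on $\bar\g = \g \oplus \g_2'$. This is an involutive automorphism of $\bar\g$ whose fixed-point subalgebra is $\h \oplus \mathfrak k_2' = \bar\h$; hence $\bar H$ is a locally symmetric subgroup and the $\bar H$-action on $\bar M$ is a Hermann action.

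The converse is the main point. Suppose $\bar\sigma$ is an involutive automorphism of $\bar\g$ with fixed-point subalgebra $\bar\h = \h \oplus \mathfrak k_2'$. I must show that $\bar\sigma$ preserves $\g$ and restricts there to an involution with fixed-point subalgebra $\h$. The crucial step is to show that $\bar\sigma$ maps the expanded factor $\g_2'$ to itself. Since $\bar\g$ is semisimple, $\bar\sigma$ permutes its simple ideals; suppose some simple ideal $\mathfrak s \subseteq \g_2'$ were sent to a simple ideal lying in $\g$. Then the fixed-point subalgebra of $\bar\sigma$ would contain a diagonal subalgebra of $\mathfrak s \oplus \bar\sigma(\mathfrak s)$ whose projection to $\g_2'$ is all of $\mathfrak s$. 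But the projection of $\bar\h = \h \oplus \mathfrak k_2'$ to $\g_2'$ is $\mathfrak k_2'$, forcing $\mathfrak s \subseteq \mathfrak k_2'$, which contradicts the fact that the symmetric subalgebra $\mathfrak k_2$ of the almost effective pair $(G_2,K_2)$ contains no nonzero ideal of $\g_2$. (Swaps of two simple ideals within $\g_2'$ are harmless and leave $\g_2'$ invariant.) Hence $\bar\sigma(\g_2') = \g_2'$, so $\bar\sigma$ preserves the complementary ideal $\g$ as well. The restriction $\tau = \bar\sigma|_\g$ is then an involution of $\g$ with $\g^\tau = \bar\h \cap \g = \h$, so $H$ is a locally symmetric subgroup of $G$ and the $H$-action on $M$ is a Hermann action.

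I expect the only real obstacle to be exactly this invariance of the expanded factor $\g_2'$ under $\bar\sigma$; once it is established, both implications follow at once. It is worth emphasising that the argument relies on $(G_2,K_2)$ being almost effective, which holds because $G_2$ is the universal cover of the identity component of the isometry group of $M_2$, so that $\mathfrak k_2$ indeed contains no nonzero ideal of $\g_2$.
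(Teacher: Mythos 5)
Your proof is correct and follows essentially the same route as the paper: the forward direction by forming the product involution $\bar\tau = \tau \oplus \rho_2$ on $\bar\g = \g \oplus \g_2'$, and the converse by showing the given involution of $\bar\g$ restricts to an involution of $\g$ with fixed-point algebra $\h$. The paper's converse is a one-line assertion that the involution of $\bar G$ restricts to $G$; your argument that $\bar\sigma$ must preserve the ideal $\g_2'$ (via the permutation of simple ideals and the almost-effectiveness of the pair $(G_2,K_2)$) supplies precisely the justification the paper leaves implicit.
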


\begin{proof}
Assume the $H$-action on~$M$ is a Hermann action. Then there is an involutive automorphism~$\tau$ of~$G$ such that $H$ is the connected component of the fixed point set of~$\tau$. Furthermore, there is an involutive automorphism $\rho_2$ of~$G_2$ such that $M_2 = G_2 / K_2$, where $K_2$ agrees up to components with the fixed point set of~$\rho_2$.
Let $\bar M = M_1 \times G_2$. The connected component of the universal cover of the isometry group of~$\bar M$ is $\bar G = G \times G_2$. Define $\bar\tau \in \Aut(\bar G)$ by $\bar\tau(g,g_2) = (\tau(g),\rho_2(g_2))$ for $g \in G$, $g_2 \in G_2$. This shows that $\bar H = H \times K_2$ is a symmetric subgroup of~$\bar G$.

Conversely, if $\bar H = H \times K_2$ agrees up to components with the fixed point set of an involutive automorphism of $\bar G \cong G \times G_2$ and $K_2$ is a symmetric subgroup of~$G_2$, then this automorphism restricts to an involution of~$G$.
\end{proof}


\section{Transitive Actions}
\label{sec:Trans}


Let $G$ be a Lie group and let $G', G'' \subseteq G$ be two closed subgroups. Following Oni\v{s}\v{c}ik~\cite{oniscik}, the triple $(G,G',G'')$ is said to be a \emph{decomposition} of~$G$ if we have $G = G' \cdot G''$, i.e.\ if
every element $g \in G$ can be written as $g = g'\,g''$, where $g' \in G'$ and $g'' \in G''$. A decomposition $(G,G',G'')$ is said to be a \emph{proper decomposition} if the Lie algebras of both $G'$ and $G''$ are proper subalgebras of the Lie algebra of~$G$. All proper decompositions of connected simple compact Lie groups have been determined in \cite[Theorem~4.1]{oniscik}. We reproduce the result in our Table~\ref{t:Trans}. Let us make some remarks on the table. The problem of finding all decompositions of a connected Lie group can be formulated entirely on the Lie algebra level and accordingly the table has to be interpreted on the Lie algebra level. It depends only on the conjugacy class of~$G'$ and~$G''$ in~$G$ if $(G,G',G'')$ is a decomposition.

One can immediately make the following observations from the table: The only simple compact Lie groups having proper decompositions are those of types $\LA_{2n-1}$, $\LB_3$, $\LD_n$. Furthermore, if $(G,G',G'')$ is a proper decomposition of the compact Lie group~$G$, then $G'$ and $G''$ are products all of whose normal subgroups are of type $\LA_n$, $\LB_n$, $\LC_n$, $\LD_3$, $\LG_2$, or one-dimensional.

\begin{proposition}\label{prop:TransDec}
Let $G$ be a connected simple compact Lie group. Let $H \subset G \times G$ be a closed connected subgroup whose action on~$G$ as defined in (\ref{eq:GAct}) is transitive. Then $H = G' \times G''$ where $(G, G',G'')$ is a decomposition of~$G$.
\end{proposition}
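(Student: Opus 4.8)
The plan is to take $G'$ and $G''$ to be the two projections of $H$ and then to establish, in two separate steps, that $(G,G',G'')$ is a decomposition and that $H$ is the direct product $G' \times G''$.

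First I would put $G' := \pi_1(H)$ and $G'' := \pi_2(H)$. Being continuous images of the compact connected group~$H$, these are closed connected subgroups of~$G$. The $H$-orbit of the identity~$e \in G$ is
\[
H \cdot e = \{\, h_1 h_2^{-1} \mid (h_1,h_2) \in H \,\} \subseteq G' \cdot G'',
\]
and by transitivity it equals all of~$G$. Hence $G = G' \cdot G''$, so $(G,G',G'')$ is a decomposition in the sense of Oni\v{s}\v{c}ik. This step is routine; the real content of the proposition is the assertion $H = G' \times G''$.

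To prove the latter I would argue on the Lie algebra level, which is legitimate since $H$ is connected. Identifying $T_eG \cong \g$, the fundamental vector field of $(X,Y) \in \h$ at~$e$ is $X - Y$, so transitivity is equivalent to $\{\, X - Y \mid (X,Y) \in \h \,\} = \g$. Set $\mathfrak n' = \{X \mid (X,0) \in \h\}$ and $\mathfrak n'' = \{Y \mid (0,Y) \in \h\}$; these are ideals of $\g' = \pi_1(\h)$ and $\g'' = \pi_2(\h)$. Since $\g'$ and $\g''$ are compact, the ideals split off orthogonal complementary ideals $\g' = \mathfrak n' \oplus \mathfrak c'$ and $\g'' = \mathfrak n'' \oplus \mathfrak c''$, and Goursat's lemma yields an isomorphism $\phi\colon \mathfrak c' \to \mathfrak c''$ with
\[
\h = \mathfrak n' \oplus \mathfrak n'' \oplus \{\, (x,\phi(x)) \mid x \in \mathfrak c' \,\}.
\]
Now $H = G' \times G''$ is equivalent to $\mathfrak c' = 0$, i.e.\ to the triviality of the Goursat gluing. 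Substituting this description of~$\h$ into the transitivity condition turns it into
\[
\mathfrak n' + \mathfrak n'' + \{\, x - \phi(x) \mid x \in \mathfrak c' \,\} = \g,
\]
so the task reduces to showing that a nonzero~$\mathfrak c'$ is incompatible with this equality.

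The main obstacle is precisely this last exclusion. The mechanism is transparent in the extreme case $\g' = \g'' = \g$, where $\mathfrak n' = \mathfrak n'' = 0$ and $\phi$ is an automorphism of~$\g$: the condition reads $(\id - \phi)\g = \g$, but every automorphism of a compact simple Lie algebra fixes a nonzero subalgebra $\g^\phi$, so $\id - \phi$ is singular and transitivity fails; thus a twisted diagonal can never act transitively, and in this case $H$ must be all of $G \times G$. For the general case I would reformulate the displayed equality, via an $\Ad(G)$-invariant inner product, as the nonexistence of a nonzero $Z \in \g$ with $Z \perp \mathfrak n'$, $Z \perp \mathfrak n''$ and $\langle Z,x\rangle = \langle Z,\phi(x)\rangle$ for all $x \in \mathfrak c'$, and then combine this with the classification of decompositions reproduced in Table~\ref{t:Trans}: inspecting the admissible factor types of $\g'$ and $\g''$ should show that the complementary ideals $\mathfrak c'$ and $\mathfrak c''$ cannot be matched by an isomorphism $\phi$ compatible with $\g' + \g'' = \g$ unless they vanish, which reduces the situation to the model case above. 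Carrying out this verification across the entries of the table is the part I expect to require genuine work; once it is done, $\mathfrak c' = 0$ follows and hence $H = G' \times G''$.
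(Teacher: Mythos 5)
Your overall architecture --- take $G'=\pi_1(H)$, $G''=\pi_2(H)$, obtain the decomposition from transitivity at~$e$, describe $\h$ by Goursat as $\mathfrak n' \oplus \mathfrak n'' \oplus \{(x,\phi(x)) \mid x\in\mathfrak c'\}$, kill the full twisted diagonal via fixed points of automorphisms, and settle the rest by inspecting Table~\ref{t:Trans} --- is essentially that of the paper (which phrases Goursat as: $H\neq G'\times G''$ forces $\g'$ and $\g''$ to contain isomorphic nonzero ideals, and which cites rather than proves the non-transitivity of twisted diagonals). The genuine gap is in the step you defer to a table inspection, and your prediction of what that inspection yields is wrong. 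Table~\ref{t:Trans} contains exactly one proper decomposition whose two factors have isomorphic Lie algebras, namely $(\SO(8),\Spin(7),\SO(7))$, where $\g'\cong\g''\cong\so(7)$. For this entry a matching isomorphism $\phi\colon\mathfrak c'\to\mathfrak c''$ (with $\mathfrak n'=\mathfrak n''=0$, $\mathfrak c'=\g'$, $\mathfrak c''=\g''$) certainly exists, and it is even compatible with $\g'+\g''=\g$, since $\dim(\spin(7)+\so(7))=21+21-\dim\LG_2=28$. So no inspection of ``admissible factor types'' can force $\mathfrak c'=0$: a nontrivially glued connected subgroup $H\cong\Spin(7)$, diagonally embedded in $\Spin(7)\times\SO(7)$, really does project onto both factors of a decomposition. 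What excludes it is transitivity itself, by a dimension count: $\dim\h=21<28=\dim\so(8)$, so $\mathfrak n'+\mathfrak n''+\{x-\phi(x)\mid x\in\mathfrak c'\}=\g$ is impossible. This is precisely how the paper's proof concludes, and it is the only nontrivial point of the whole proposition; without identifying and handling this exceptional case your argument is incomplete exactly where the real content lies.

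Two lesser points. First, the fact that every automorphism of a compact simple Lie algebra has nonzero fixed-point set is true but not free: either cite it (the paper cites~\cite[Section~3.2]{hyperpolar} for the non-transitivity of twisted diagonals; alternatively it follows from de Siebenthal's theorem by averaging a vector in an invariant Weyl chamber) or prove it. Second, Table~\ref{t:Trans} lists only \emph{proper} decompositions, so your case analysis must separately handle $\g'=\g$, $\g''\subsetneq\g$ with $\mathfrak c'\neq 0$: there Goursat plus simplicity of~$\g$ gives $\mathfrak n'=0$, hence $\g''$ would contain an ideal isomorphic to~$\g$, impossible by dimension. This case is easy, but it is covered neither by your model case $\g'=\g''=\g$ nor by the table.
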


\begin{proof}
We may assume $H$ is not of the form $G \times H_2$ or $H_1 \times G$ for $H_1,H_2 \subseteq G$, since $(G,G,H_2)$ and $(G,H_1,G)$ are decompositions of~$G$. Then $H$ is contained in a maximal connected subgroup of $G \times G$, i.e.\ $H$ is contained in a subgroup of the form $\{(g,\sigma(g)) \mid g \in G \}$, where $\sigma \in \Aut(G)$, or in a subgroup of the form $G \times G''$ or $G' \times G$, where $G',G'' \subset G$ are closed connected subgroups, see \cite[Theorem~2.1]{hyperpolar} or \cite[Theorem~15.1]{dynkin1}. The first kind of subgroups does not act transitively on~$G$, see~\cite[Section~3.2]{hyperpolar}. Hence $G$ is contained in a subgroup of the latter kind. Define $G' = \pi_1(H)$ and $G'' = \pi_2(H)$. Then $(G,G',G'')$ is a proper decomposition of the simple compact Lie group~$G$. If $H = G' \times G''$, we are done.
Thus assume now $H \neq G' \times G''$, where $(G,G',G'')$ is a proper decomposition of~$G$. It follows from \cite[Theorem~2.1]{hyperpolar} that the Lie algebras $\g'$ and $\g''$ contain an isomorphic ideal.
The only proper decomposition of a simple compact Lie group where this is the case is $(\SO(8),\Spin(7),\SO(7))$, see Table~\ref{t:Trans}. It follows that $H$ is a diagonal subgroup of $\Spin(7) \times \SO(7)$.
However, a Lie group of dimension~$21$ cannot act transitively on~$\SO(8)$. \end{proof}

\begin{table}
\begin{tabular}{*{5}{|c}|}
\hline
\str {No.} & $G'$ & $G$ & $G''$ & $(G' \cap G'')^0$ \\
\hline\hline
\str $1$ & $\Sp(n)$ & $\SU(2n)$ & $\SUxU{2n-1}{1}$ & $\Sp(n-1)\times\U(1)$ \\

 $$ &  & & $\SU(2n-1)$ & $\Sp(n-1)$ \\
\hline
\str $2$ & $\SO(2n-1)$ & $\SO(2n)$ &  $\U(n)$ & $\U(n-1)$ \\

 $$ &  & & $\SU(n)$ & $\SU(n-1)$ \\

\str $$ & $\Spin(7)$ & $\SO(8)$ &  $\SO(6)\times\SO(2)$ & $\U(3)$ \\
 $$ &  & &  $\SO(6)$ & $\SU(3)$ \\
\hline
\str $3$ & $\SO(4n-1)$ & $\SO(4n)$ &  $\Sp(n)\cdot\Sp(1)$ & $\Sp(n-1)\cdot\Sp(1)$ \\

 $$ &  & & $\Sp(n)\cdot\U(1)$ & $\Sp(n-1)\cdot\U(1)$ \\

 $$ &  & & $\Sp(n)$ & $\Sp(n-1)$ \\
\str $$ & $\Spin(7)$ & $\SO(8)$ &  $\SO(5)\times\SO(3)$ & $\Sp(1) \cdot \Sp(1)$ \\
 $$ &  & &  $\SO(5)\times\SO(2)$ & $\Sp(1) \cdot \U(1)$ \\
 $$ &  & &  $\SO(5)$ & $\Sp(1)$ \\
\hline
\str $4$ & $\LG_2$ & $\SO(7)$ &  $\SO(6)$ & $\SU(3)$ \\
\hline
\str $5$ & $\LG_2$ & $\SO(7)$ &  $\SO(5)\times\SO(2)$ & $\U(2)$ \\

 $$ &  & & $\SO(5)$ & $\SU(2)$ \\
\hline

\str $6$ & $\Spin(7)$ & $\SO(8)$ &  $\SO(7)$ & $\LG_2$ \\
\hline
\str $7$ & $\Spin(9)$ & $\SO(16)$ &  $\SO(15)$ & $\Spin(7)$ \\
\hline
\end{tabular}
\bigskip
\caption{Transitive actions.} \label{t:Trans}
\end{table}


\section{Two Irreducible Factors}
\label{sec:TwoFactors}


The purpose of this section is to prove the following.

\begin{theorem}\label{thm:hyptwof}
Let $M_1$ and $M_2$ be irreducible Riemannian symmetric spaces of the compact type. Assume there is an indecomposable hyperpolar action of a compact Lie group~$H$ on~$M_1 \times M_2$ of cohomogeneity greater than one. Then the $H$-action is orbit equivalent to a Hermann action.
\end{theorem}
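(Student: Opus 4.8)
The plan is to reduce first to two factors of Type~II via the group lift, and then to combine the transitivity of the projection actions with the hyperpolarity of the intersection actions. First I would pass to the group lift (Definition~\ref{def:grouplift}). By Theorem~\ref{thm:str}, Proposition~\ref{prop:HermExp} and Lemma~\ref{lm:orbeqex}, the properties of being hyperpolar, indecomposable, of a given cohomogeneity, orbit equivalent to a fixed action, and a Hermann action are all preserved under expanding and reducing factors; moreover expanding the Type~I factors turns them into Type~II factors, so the lifted space is again a product of exactly two irreducible factors, now both of Type~II. Hence it suffices to treat the group lift, and I may assume $M_1 = L_1$ and $M_2 = L_2$ are simple compact Lie groups and that $H$ is a closed connected subgroup of $(L_1 \times L_1) \times (L_2 \times L_2)$ acting via~(\ref{eq:GAct}) on each factor.

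Since the action is indecomposable and hyperpolar, Lemma~\ref{lm:indechp} shows that the two projection actions on $L_1$ and $L_2$ are transitive. By Proposition~\ref{prop:TransDec} the image $P_i$ of $H$ in $L_i \times L_i$ has the form $P_i = A_i \times B_i$, where $(L_i, A_i, B_i)$ is a decomposition of $L_i$ in the sense of Oni\v{s}\v{c}ik. If $H$ were equal to $P_1 \times P_2$ the action would be a product action, hence decomposable; therefore $H$ is a proper subgroup of $P_1 \times P_2$ surjecting onto both factors, and by a Goursat-type argument the passage from $P_1 \times P_2$ to $H$ is governed by an isomorphism between quotients of $P_1$ and $P_2$, that is, by identifications of simple factors of $A_1 \times B_1$ with simple factors of $A_2 \times B_2$. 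Recording these multiplication couplings in a graph analogous to the one in the proof of Proposition~\ref{prop:indecHer}, I would note that indecomposability forces this graph to be connected.

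The decisive input is Proposition~\ref{prop:NonSplt}: the intersection action of $H_{o_1}$ on $L_2$ and of $H_{o_2}$ on $L_1$ are hyperpolar, and by Remark~\ref{rem:NonSplt} of the same cohomogeneity $d > 1$ as the original action. As $L_1$ and $L_2$ are irreducible, the classification of hyperpolar actions on irreducible symmetric spaces of~\cite{hyperpolar} applies to these intersection actions; being of cohomogeneity greater than one, they are orbit equivalent to Hermann actions, whose acting groups on a Type~II space are, up to orbit equivalence, products of at most two symmetric subgroups. Feeding this back, together with the transitivity of the projection actions and Oni\v{s}\v{c}ik's Table~\ref{t:Trans}, I would argue that the proper decompositions of the table (such as those involving $\Spin(7) \subset \SO(8)$ or $\Sp(n) \subset \SU(2n)$) cannot occur, so that each $(L_i, A_i, B_i)$ is trivial, that the coupled inner simple factors must be full simple groups, forcing $L_1 \cong L_2 =: L$, and that the remaining outer factors are symmetric subgroups of~$L$.

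Once this structure is established, the connected coupling graph is either a path or a cycle, and the action takes exactly the shape of Proposition~\ref{prop:indecHer}(i) or~(iv) with $n = 2$; equivalently, one enlarges $H$ to the corresponding Hermann group $\hat H$, which is non-transitive because $d > 0$, so that by Lemma~\ref{lm:maxsubg} the hyperpolar indecomposable subaction of $H$ is orbit equivalent to $\hat H$. Reducing the expanded factors and invoking Proposition~\ref{prop:HermExp} and Lemma~\ref{lm:orbeqex} then returns the statement for the original $M_1 \times M_2$. I expect the main obstacle to lie in the third paragraph: carefully excluding the proper Oni\v{s}\v{c}ik decompositions and pinning down the symmetric-subgroup structure of the outer factors. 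This is precisely where the interplay between the \emph{transitivity} furnished by the projection actions and the \emph{hyperpolarity} furnished by the intersection actions must be exploited case by case, and where the hypothesis $d > 1$ is essential in order to rule out the cohomogeneity-one alternative in the irreducible classification.
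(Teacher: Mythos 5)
Your skeleton matches the paper's proof: pass to the group lift so both factors are Type~II, get transitivity of the two projection actions from Lemma~\ref{lm:indechp}, apply Proposition~\ref{prop:TransDec} to see that each projection is governed by an Oni\v{s}\v{c}ik decomposition, and observe that the surviving case (both decompositions non-proper) is an expansion of a hyperpolar action on a single simple group, which the main theorem of~\cite{hyperpolar} together with Proposition~\ref{prop:HermExp} and Lemma~\ref{lm:orbeqex} converts into orbit equivalence with a Hermann action. The genuine gap sits exactly where you place your ``main obstacle'': the exclusion of the proper decompositions is asserted (``I would argue \dots cannot occur'') but never carried out, and it does not follow formally from the ingredients you cite. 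Knowing that the intersection actions of $H_{o_1}$ on $L_2$ and $H_{o_2}$ on $L_1$ are orbit equivalent to Hermann actions constrains only their \emph{orbits}, not the groups: a Hermann orbit foliation of a Type~II space can be realized by proper subgroups of a symmetric subgroup and also by $\sigma$-actions (which your parenthetical ``products of at most two symmetric subgroups'' omits), so no Goursat-type bookkeeping of coupled simple factors is forced by this information alone. In the paper this exclusion is the bulk of the proof: Subsection~\ref{subsec:pp} runs case by case through Table~\ref{t:Trans}, organized by the type of the diagonal factor, using the dimension bound~(\ref{eq:dim}), Lemma~\ref{lm:RankOneDone}, Lemma~\ref{lm:maxsubg} and specific results quoted from~\cite{hyperpolar}; and the mixed case (one proper, one non-proper decomposition), which your coupling-graph picture silently absorbs, needs its own structure~(\ref{eq:pnp}) involving the centralizer $Z$ of $G_1$ in $G_2$ and the separate Lemmas~\ref{lm:pnp} and~\ref{lm:pnpclass}. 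Nothing in your sketch substitutes for this analysis, and it is precisely here that the hypothesis of cohomogeneity greater than one is consumed.

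A secondary flaw is in your closing step: you enlarge $H$ to a Hermann group $\hat H$ and invoke Lemma~\ref{lm:maxsubg}, but that lemma requires the containment $H \subseteq \hat H$, which you have not established; the classification results only give orbit equivalence, not containment in a symmetric subgroup of the isometry group. The paper avoids this issue in Subsection~\ref{subsec:npnp} by noting that, both decompositions being non-proper, the action on $G \times G$ is obtained by expanding the factor $G$ in an $H$-action on the single group $G$; Theorem~\ref{thm:str} makes that $H$-action hyperpolar, \cite{hyperpolar} makes it orbit equivalent to a Hermann action (cohomogeneity one being excluded by hypothesis), and Proposition~\ref{prop:HermExp} with Lemma~\ref{lm:orbeqex} transfers the orbit equivalence back to $G \times G$. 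Your final reduction should be rewritten along these lines rather than through Lemma~\ref{lm:maxsubg}.
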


\begin{proof}
By expanding the two factors $M_1$ and $M_2$, if necessary, we may assume that both factors $M_1$ and $M_2$ are symmetric spaces of Type~II, i.e.\ simple compact Lie groups. We may assume the compact Lie group~$H$ acts  almost effectively and hyperpolarly, but not transitively, on~$M = M_1 \times M_2$ in such a way that the $H$-actions on both $M_1$ and $M_2$ are transitive. By Proposition~\ref{prop:TransDec} it follows that both the $H$-action on~$M_1$ and the $H$-action on~$M_2$ are given by (not necessarily proper) decompositions of~$M_1$ and~$M_2$. We start with the case where both decompositions are proper. Let us remark that in order to study the action on $M_1 \times M_2$, we may reduce the factors $M_1$, $M_2$, or both, if possible; by Theorem~\ref{thm:str} this makes no difference for the hyperpolarity of the action and it does not change the cohomogeneity. In the following, we will use this fact wherever it is convenient.

We may assume that the $H$-action on~$M = M_1 \times M_2$ is not a product action (not even locally) and hence that there is a nonzero ideal of~$\h$ such that the corresponding connected subgroup of~$H$ acts almost effectively on~$M_1$ as well as on~$M_2$. We call a maximal such group a \emph{diagonal factor} of the two actions.

\begin{lemma}\label{lm:RankOneDone}
Let $M_1$ and $M_2$ be Riemannian symmetric spaces of the compact type.
Assume there is an indecomposable hyperpolar action on $M_1 \times M_2$. Then the cohomogeneity of the action is less or equal $\min(\rk(M_1),\rk(M_2))$.
In particular, if one of the spaces $M_1$ or $M_2$ is a rank-one symmetric space, then the action is of cohomogeneity one.
\end{lemma}

\begin{proof}
Assume there is an indecomposable hyperpolar action on $M_1 \times M_2$ of cohomogeneity~$d$. By Proposition~\ref{prop:NonSplt} it follows that there are hyperpolar actions on $M_i$, $i=1,2$ of cohomogeneity~$d$. Thus (\ref{eq:dim}) implies that $d \le \min(\rk(M_1),\rk(M_2))$.
\end{proof}

\subsection{Both decompositions are proper}\label{subsec:pp}

Inspection of Table~\ref{t:Trans} shows that a diagonal factor is either semisimple or the direct product of a semisimple and a one-dimensional Lie group if both decompositions are proper.
We start by considering simple diagonal factors. We will indicate the type of action we are considering by referring to the numbering of Table~\ref{t:Trans}, e.g.\ if the diagonal factor is $\LG_2$, then $4-5$ refers to the action of~$\LG_2$ on $(\SO(7)/\SO(6)) \times (\SO(7) / \SO(5) \SO(2))$. In cases where an ambiguity can arise, a~${\rm \Delta}$ is put in front of the diagonal factor.

\subsubsection*{Diagonal factors of type $\LA_n, n \ge 2$} In Table~\ref{t:Trans}, only the decompositions 1, 2 and 4 have simple factors of type~$\LA_n, n \ge 2$.

\paragraph{$1-1$} Consider the action of $\U(1) \times \SU(2n-1) \times \U(1)$ on $(\SU(2n) / \Sp(n))^2$ for $n \ge 2$. By Proposition~\ref{prop:NonSplt} and Table~\ref{t:Trans}, this action is hyperpolar only if the action of $\Sp(n-1) \times \U(1)$ on $\SU(2n) / \Sp(n)$ is. However, this action is a subaction of the $\SUxU{2n-2}{2}$-action on $\SU(2n) / \Sp(n)$, which is of cohomogeneity one~\cite[Theorem~B]{hyperpolar}. Thus, by Lemma~\ref{lm:maxsubg}, it is of cohomogeneity one or it is not hyperpolar.

\paragraph{$1-2,\;2-2,\;2-4$} By Lemma~\ref{lm:RankOneDone}, these actions are of cohomogeneity one if they are hyperpolar.

\paragraph{$4-4$} By Proposition~\ref{prop:NonSplt} and Table~\ref{t:Trans}, the action of $\SO(6)\times \LG_2^2$ on~$\SO(7)^2$ is hyperpolar only if the $\SU(3) \times \LG_2$-action on~$\SO(7)$ is hyperpolar. This action is a subaction of the $\U(3) \times \LG_2$-action on~$\SO(7)$, which is of cohomogeneity one~\cite[Theorem~B]{hyperpolar}. Thus, by Lemma~\ref{lm:maxsubg}, it is of cohomogeneity one or it is not hyperpolar.

\subsubsection*{Diagonal factors of type $\LB_n, n \ge 2$}

The only decompositions with simple factors of type~$\LB_n, n \ge 2$, are 2, 3, 5, 6, 7. Note that decomposition~1 with $n=2$ is (locally) the same as decomposition~2 with $n=3$, using the isomorphism $\SU(4) \cong Spin(6)$. Therefore, we do not need to consider decomposition~1 here.

\paragraph{$2-2$} By Proposition~\ref{prop:NonSplt} and Table~\ref{t:Trans}, the action of~$\SO(2n-1)$ on $(\SO(2n) / \U(n))^2, n \ge 3$, is hyperpolar if and only if the action of $\U(n-1)$ on $\SO(2n) / \U(n)$ is. This action is a subaction of the $\SO(2n-2) \times \SO(2)$-action on $\SO(2n) / \U(n)$, which is of cohomogeneity~one. Hence it follows from Lemma~\ref{lm:maxsubg} that the $\SO(2n-1)$-action is non-hyperpolar if it is not of cohomogeneity~one.

\paragraph{$2-3$} Assume the action of~$\SO(4n-1) \times \Sp(n) \cdot \Sp(1)$ on $(\SO(4n) / \U(2n)) \times \SO(4n)$ is hyperpolar. This leads to a contradiction with condition~(\ref{eq:dim}) if $\sf n \ge 3$. Since $\Sp(2) \cdot \Sp(1)$ is a locally symmetric subgroup of $\SO(8)$, cf.~\cite[Proposition~3.3]{hyperpolar}, we obtain a contradiction by Lemma~\ref{lm:RankOneDone} in case $\sf n = 2$.

\paragraph{$2-5,\;2-6$} It follows from Lemma~\ref{lm:RankOneDone} that these actions are of cohomogeneity one if they are hyperpolar.

\paragraph{$2-7$} Here we have to consider two different actions, depending on whether the diagonal factor is $\SO(15)$ or $\Spin(9)$. The action of $\Spin(9) \times {\rm \Delta} \SO(15)$ on $(\SO(16) / \U(8)) \times \SO(16)$ is not hyperpolar by condition~(\ref{eq:dim}). The action with diagonal factor ${\rm \Delta} \Spin(9)$ is of cohomogeneity one or not hyperpolar by Lemma~\ref{lm:RankOneDone}.

\paragraph{$3-3$} Consider the action of $(\Sp(n) \cdot \Sp(1)) \times \SO(4n-1) \times (\Sp(n) \cdot \Sp(1))$ on $\SO(4n)^2$, for $n \ge 2$. This action is not hyperpolar by condition~(\ref{eq:dim}).

\paragraph{$3-6$} By Lemma~\ref{lm:RankOneDone}, this action is of cohomogeneity one if it is hyperpolar.

\paragraph{$3-7$} The action of $(\Sp(4) \cdot \Sp(1)) \times \SO(15) \times \Spin(9)$ on $\SO(16)^2$ is not hyperpolar by condition~(\ref{eq:dim}).

\paragraph{$5-5$} Consider the action of $\SO(5) \times \SO(2)^2 \times \LG_2^2$ on~$\SO(7)^2$. By Proposition~\ref{prop:NonSplt} and Table~\ref{t:Trans}, this action is hyperpolar if and only if the action of~$\U(2) \times \LG_2$ on $\SO(7)$ is. This action is a subaction of the $\LG_2^2$-action on~$\SO(7)$, which is of cohomogeneity one. Hence this action is of cohomogeneity one if it is hyperpolar by Lemma~\ref{lm:RankOneDone}.

\paragraph{$6-6$} It follows from Lemma~\ref{lm:RankOneDone} that this action is of cohomogeneity one if it is hyperpolar.

\paragraph{$7-7$} There are two actions to consider here. First, it follows from Lemma~\ref{lm:RankOneDone} that the ${\rm \Delta}\Spin(9)$-action on $(\eS^{15})^2$ is of cohomogeneity one if it is hyperpolar. Second, we have the action of ${\rm \Delta}\SO(15) \times \Spin(9)^2$ on~$\SO(16)^2$. This action is not hyperpolar by condition~(\ref{eq:dim}).

\subsubsection*{Diagonal factors of type $\LC_n, n \ge 3$} These only occur in decompositions 1 and 3.

\paragraph{$1-1,\;1-3,\;3-3$} It follows from Lemma~\ref{lm:RankOneDone} that these actions are of cohomogeneity one if they are hyperpolar.

\subsubsection*{Diagonal factors of type $\LG_2$}
Diagonal factors of type $\LG_2$ only occur in decompositions~4 and~5.

\paragraph{$4-4,\;4-5$} It follows from Lemma~\ref{lm:RankOneDone} that these actions are of cohomogeneity one if they are hyperpolar.

\paragraph{$5-5$} Assume the Lie group $\LG_2$ acts hyperpolarly on $(\SO(7) / \SO(5) \times \SO(2))^2$. It then follows from Lemma~\ref{lm:RankOneDone} that the cohomogeneity is at most~$4$. Counting dimensions leads to a contradiction.

\subsubsection*{Diagonal factors which are non-simple or contain normal subgroups of rank one}
In case the diagonal factor is one-dimensional, the action on $M_1 \times M_2$ is transitive, as can be seen from Table~\ref{t:Trans}, thus all non-transitive subactions of actions whose diagonal factor contains an abelian normal subgroup are also subactions of the actions considered above. Thus by Lemma~\ref{lm:maxsubg} they are of cohomogeneity one or not hyperpolar.
A similar argument applies to diagonal factors of rank one and those containing more than one simple ideal.

\subsection{One decomposition proper, the other non-proper}
\label{subsec:pnp}

We assume here that $M$ is the direct product of two simple compact Lie groups $M = G_1 \times G_2$. Furthermore, since one decomposition is non-proper, it follows that $H$ contains a simple factor which is locally isomorphic to one of $G_1$ or $G_2$, say~$G_1$. Since the $H$-action on~$M$ is indecomposable, it follows that this factor is contained in a diagonal factor of the action. Hence we may assume we are in the following situation.
The group $G_1 \subset G_2$ is a proper closed subgroup, $H_1 \subset G_1$ and $H_2 \subset G_2$ are proper closed subgroups, $Z$ is the centralizer of~$G_1$ in~$G_2$ and $G_2 = (G_1 Z) \cdot H_2$ is a proper decomposition of~$G_2$.
An action of $H_1 \times Z \times H_2 \times G_1$ on~$M$ is defined as follows:
\begin{align}\label{eq:pnp}
(h_1,z,h_2,g) \cdot (x,y) = (h_1 \, x \, g^{-1},\; g \, z \, y \, h \,_2^{-1}),
\end{align}
where $(h_1,z,h_2,g) \in H_1 \times Z \times H_2 \times G_1$ and $(x,y) \in G_1 \times G_2$.
Furthermore, we may assume $H$ is a closed subgroup of $H_1 \times Z \times H_2$ such that the restriction of the action of $H_1 \times Z \times H_2 \times G_1$ on~$M$ to $H \times G_1$ is hyperpolar.

\begin{lemma}\label{lm:pnp}
The action of $H \times G_1$ on $G = G_1 \times G_2$ as defined by~(\ref{eq:pnp}) and the action of $H$ on~$G_2$, defined by
\begin{align}\label{eq:pnp2ndf}
(h_1,z,h_2) \cdot y = h_1 \, z \, y \, h_2^{-1},
\end{align}
where $(h_1,z,h_2) \in H_1 \times Z \times H_2$, $y \in G_2$, have the same cohomogeneity. Furthermore, if the action of~$H \times G_1$ on~$G$ is hyperpolar, then the $H$-action on~$G_2$ is, too.
\end{lemma}

\begin{proof}
The second part of the statement follows from Proposition~\ref{prop:NonSplt}~(iii) if one chooses $o_1$ to be the identity element of~$G_1$. For the first part, see Remark~\ref{rem:NonSplt}.
\end{proof}

In Examples~\ref{eg:telescope} below, two such actions are exhibited.

\begin{lemma}\label{lm:pnpclass}
Let $G_1$ and $G_2$ be simple compact Lie groups such that $G_1 \subset G_2$ is a proper closed subgroup. Let $G = G_1 \times G_2$ be endowed with the biinvariant Riemannian metric induced by an $\Ad(G)$-invariant inner product on the Lie algebra of~$G$.
Let $H$ be a closed subgroup of $H_1 \times Z \times H_2$ where $H_1 \subset G_1$ and $H_2 \subset G_2$ are proper closed subgroups, where $Z$ is the centralizer of~$G_1$ in~$G_2$ and where $G_2 = (G_1 Z) \cdot H_2$ is a proper decomposition. Assume the $H \times G_1$-action on~$M$ as defined by~(\ref{eq:pnp}) is hyperpolar and indecomposable. Then it is of cohomogeneity one.
\end{lemma}

\begin{proof}
We will mostly use Lemma~\ref{lm:pnp}.
It follows from the lemma that the action of~$H$ on~$G_2$ is hyperpolar. By Lemma~\ref{lm:maxsubg} we may assume $H = H_1 \times Z \times H_2$. Such hyperpolar actions have been classified in~\cite[Subsection~2.4.5]{hyperpolar}. There we were only interested in maximal non-transitive subgroups and to prove the proposition we will have to slightly refine this classification.
As in~\cite{hyperpolar}, we will go through the rows of Table~\ref{t:Trans} case-by-case, using the following numbering. The various cases are denoted as 1.a), 1.b), $\dots$, 7.a), 7.b), where the number refers to the number in Table~\ref{t:Trans} and where the letter a) means $G'=G_1Z, G''=H_2$, the letter b) means $G'=H_2, G''=G_1Z$, cf.~\cite{hyperpolar}. We do not need to consider the cases where $G_1$ is of rank one, since in these cases the $H$-action on~$G_2$ is transitive, as can be seen from Table~\ref{t:Trans}.

We may also ignore those cases where $(G_2,H_2)$ is a symmetric pair such that $G_2/H_2$ is a symmetric space of rank one, since in this case hyperpolar actions are necessarily of rank one. This applies to Cases 1.a), 2.b), 3.b), 4.a), 6.a) 6.b) and 7.a).

\paragraph{1.b)} Let $G_1 = \SU(2n-1)$, $G_2=\SU(2n)$, $Z=\U(1)$, $H_2=\Sp(n)$. Let $o_2$ be the identity element of~$G_2$. Consider the intersection action of~$H_{o_2}$ on~$G_1$. It follows from Table~\ref{t:Trans} that this intersection action is given by the action of~$H_1 \times \Sp(n-1)$ on~$\SU(2n-1)$. This is a subaction of the action of~$H_1 \times \SUxU{2n-2}1$ on~$\SU(2n-1)$. If the former action is hyperpolar, then it follows from Lemma~\ref{lm:maxsubg} that the two actions are orbit equivalent. However, we have $\rk(\SU(2n-1) / \SUxU{2n-2}1) = 1$ and it follows that in this case the action is of cohomogeneity one.

\paragraph{2.a)} The case where $G_1 = \SO(2n-1)$, $G_2=\SO(2n)$, $Z=\{1\}$, $H_2=\U(n)$ can be treated in an analogous fashion as Case~1.b).

\paragraph{3.a)} Let $G_1 = \SO(4n-1)$, $G_2=\SO(4n)$, $Z=\{1\}$, $H_2=\Sp(n) \cdot \Sp(1)$. Let $o_2$ be the identity element of~$G_2$. Consider the intersection action of~$H_{o_2}$ on~$G_1$. It follows from Table~\ref{t:Trans} that this intersection action is given by the action of~$H_1 \times (\Sp(n-1)\times \SO(3))$ on~$\SO(4n-1)$. This action is a subaction of the action of~$H_1 \times (\SO(4n-4)\times \SO(3))$ on~$\SO(4n-1)$ and hence it is at most of cohomogeneity three if it is hyperpolar. By the main result of~\cite{hyperpolar}, if it is not of cohomogeneity one, we may assume $H_1 = \SO(4n-\ell-1)\times \SO(\ell)$, $2 \le \ell < 2n$, possibly replacing $H$ by a larger, orbit equivalent group. Now consider the $H$-action on~$G_2$ as given in~(\ref{eq:pnp2ndf}).
Then the $H$-action on $G_2$ is a subaction of the
action of $\SO(4n-\ell-1)\times \SO(\ell+1) \times \Sp(n) \cdot \Sp(1)$ on~$\SO(4n)$, which is not hyperpolar~\cite[Subsection~2.3.2]{hyperpolar}.
Thus the $H$-action on~$G_2$ is not hyperpolar by Lemma~\ref{lm:maxsubg}.

\paragraph{4.b)} See Case 4.b) in \cite[p.~601]{hyperpolar}.

\paragraph{5.a)} The maximal subgroups of~$\LG_2$ are $\SO(4)$, $\SU(3)$ and a group of type~$\LA_1$, cf.~Case~5.a) in \cite[p.~601]{hyperpolar}. The groups $\SO(4)$ and $\LA_1$ are excluded by~(\ref{eq:dim}).  The subgroup $\SU(3)$ is contained in~$\SO(6)$, which acts with cohomogeneity one on $\SO(7)/(\SO(5)\times\SO(2))$.

\paragraph{5.b)} Let $o_2$ be the identity element of~$G_2$. Consider the $H_{o_2}$-action on~$G_1$. This action is given by the action of $H_1 \times \U(2)$ on~$\SO(5)$, where $H_1 \subset \SO(5)$ is a proper closed subgroup.
The maximal connected subgroups of $\SO(5)$ are $\SO(4)$, $\SO(3) \times \SO(2)$ and a group of type~$\LA_1$. The actions of $\SO(4) \times \U(2)$ and $(\SO(3) \times \SO(2)) \times \U(2)$ are subactions of cohomogeneity one actions, thus they are of cohomogeneity one if they are hyperpolar by Lemma~\ref{lm:maxsubg}. The subgroup of type~$\LA_1$ can be excluded by a dimension count.

\paragraph{7.b)} The irreducible maximal connected subgroups of $G_1=\SO(15)$ are excluded by a dimension count, cf.~\cite[Appendix]{hyperpolar}, the reducible ones are contained in reducible subgroups~$\SO(k) \times \SO(16-k), k=2,\dots,8$ of~$\SO(16)$. It was shown in~\cite[Subsection~2.4.1]{hyperpolar} that the action of $\Spin(9) \times (\SO(k) \times \SO(16-k))$, $k=2,\dots,8$ on~$\SO(16)$ is hyperpolar only if $k=2$, in which case the cohomogeneity is one.
\end{proof}

\subsection{Both decompositions are non-proper}\label{subsec:npnp}

With the same argument as in the beginning of Subsection~\ref{subsec:pnp}, the non-proper factor in each decomposition must be contained in the diagonal factor, forcing the two factors $M_1$ and $M_2$ to be locally isomorphic. Thus, up to coverings, we are looking at actions of the following form. Let $G$ be a simple compact Lie group and let $H \subseteq G \times G$ be a closed subgroup. Then $H \times G$ acts on $G \times G$ by the rule $(h_1,h_2,g) \cdot (x,y) = (h_1 \, x \, g^{-1},\; g \, y \, h_2^{-1})$, where $(h_1,h_2) \in H$, $g \in G$, $(x,y) \in G \times G$. Note that this action is obtained from the $H$-action on~$G$ as defined in~(\ref{eq:GAct}) by expanding~$G$. By Theorem~\ref{thm:str}, the $H$-action on the simple compact Lie group~$G$ is hyperpolar. Therefore, it is orbit equivalent to a Hermann action or of cohomogeneity one by the main result of~\cite{hyperpolar}. It follows from Proposition~\ref{prop:HermExp} and Lemma~\ref{lm:orbeqex} that the $H \times G$-action on~$G \times G$ is orbit equivalent to a Hermann action.  We have completed the proof of Theorem~\ref{thm:hyptwof}.
\end{proof}


\section{Arbitrarily Many Factors}
\label{sec:ManyFactors}


\begin{proof}[Proof of Theorem~\ref{main:Hermann}]
Let $M$ be a Riemannian symmetric space of compact type. Assume the compact Lie group $H$ acts isometrically on~$M$ such that the $H$-action on~$M$ is indecomposable and hyperpolar with cohomogeneity greater than one. It follows from Theorem~\ref{thm:str} that the group lift of the $H$-action on~$M$ is also indecomposable and hyperpolar with the same cohomogeneity. By Lemma~\ref{lm:orbeqex} and Proposition~\ref{prop:HermExp}, it suffices to show that the group lift of the $H$-action on~$M$ is orbit equivalent to a Hermann action. Hence, after replacing the $H$-action on~$M$ by its group lift, we may assume that $M$ is a compact Lie group with a biinvariant Riemannian metric. By Remark~\ref{rem:hypliealg}, we may assume that $M$ is simply connected. Thus it is of the form $M = M_1 \times M_2 \times \dots \times M_n$, where the $M_i$ are simply connected irreducible Riemannian symmetric spaces of Type~II.

It remains to show that the $H$-action on~$M$ is orbit equivalent to a Hermann action in this situation. If $M$ is irreducible, i.e.\ in case $n=1$, this follows from the main result of~\cite[Theorem~A]{hyperpolar}. For the case $n=2$, this was shown in Section~\ref{sec:TwoFactors}. Hence we may assume $n \ge 3$.

Let now $i,j \in \{1, \dots, n\}$ such that $i \neq j$. Let $I := \{i,j\}$ and let $J := \{1, \dots, n\} \setminus I$. Let $M_I = \prod_{k \in I} M_k$ and $M_J = \prod_{k \in J} M_k$. Let $o_I \in M_I$ and let $o_J \in M_J$. Consider the intersection action of $H_{o_J}$ on $M_I$. By Proposition~\ref{prop:NonSplt}, this intersection action is hyperpolar. By assumption, the space $M_I$ is a Riemannian product of two symmetric spaces of Type~II. By Remark~\ref{rem:NonSplt}, the intersection action has the same cohomogeneity as the $H$-action on~$M$, in particular, the cohomogeneity is~$\ge 2$. Thus it follows from Theorem~\ref{thm:hyptwof} that the intersection action on~$M_I$ is orbit equivalent to a Hermann action. Furthermore, we have that $M_i \cong M_j$. Since this argument applies to arbitrary pairs $i \neq j$, it follows that there is a simply connected simple compact Lie group~$L$ such that $L \cong M_1 \cong M_2 \cong \dots \cong M_n$.

Now let $k \in \{1, \dots, n\}$ such that $j \neq k \neq i$. Consider the intersection action on $M_j \times M_k$. By the same argument as above, this intersection action is also orbit equivalent to a Hermann action. Now consider the intersection action on~$M_j$. This action is at the same time an intersection action of the $H$-action on~$M$ as well as of both its intersection actions on $M_i \times M_j$ and $M_j \times M_k$.  It follows from Lemma~\ref{lm:orbeqex} that the intersection actions on $M_i \times M_j$ and on $M_j \times M_k$ are orbit equivalent, because they are both orbit equivalent to the action obtained by expanding the factor $M_j$ in the intersection action on~$M_j$. Since this argument applies to arbitrary subsets $\{i,j,k\} \subseteq \{1, \dots, n\}$ of cardinality~$3$, it follows that all the intersection actions on the simple factors~$M_i \cong L$ are orbit equivalent. Let $Q \subset L \times L$ be a connected locally symmetric subgroup whose action on~$L$ is orbit equivalent to all of these actions. After replacing $H$ by a conjugate subgroup in the isometry group of~$M$, if necessary, and using suitable identifications such that $L=M_1=\dots=M_n$, we may assume that all intersection actions on products of two simple factors $M_i \times M_j$ have the same connected components of orbits as the action obtained from the $Q$-action on~$L$ by expanding a simple factor, see also Remarks~\ref{rem:swap} and~\ref{rem:autom}.

To show that the $H$-action on~$M$ is orbit equivalent to the action on~$L^n$ obtained from the $Q$-action on~$L$ by $n$~times expanding a simple factor, it now suffices to prove that the connected components of the orbits of any indecomposable hyperpolar action on a semisimple compact Lie group is already uniquely determined by the orbits of the intersection actions on all products of two simple factors $M_i \times M_j$, where $i, j \in \{1, \dots, n\}$, $i \neq j$. Let $o_i \in M_i$ for $i=1,\dots,n$. Assume the point $o=(o_1,\dots,o_n)$ lies in a regular orbit of the $H$-action on~$M$. Let $\Sigma$ be the unique section of the $H$-action containing the point~$o$ and let $\nu = T_o\Sigma$. For $i=1,\dots,n$, let $\nu_i = \pi_i(\nu)$. It follows from Proposition~\ref{prop:NonSplt} that for every $i,j \in \{1, \dots, n\}$ there is a linear isomorphism $\Phi_{ij} \colon \nu_i \to \nu_j$ such that $\{X + \Phi_{ij}(X) \mid X \in \nu_i \}$ is the tangent space to the section of the intersection action on~$M_i \times M_j$ at the point $(o_i,o_j)$.  It follows that every vector of~$\nu$ is contained in the set $\{ X_1 + \Phi_{12}(X_1)+\dots+\Phi_{1n}(X_1) \mid X_1 \in \nu_1 \}$. By a dimension count, it follows that this set is actually equal to~$\nu$. This argument shows that the $H$-action on~$M$ has the same principal orbits as the action on~$L^n$ obtained from the $Q$-action on~$L$ by $n$~times expanding a simple factor~$L$. Thus the two actions are orbit equivalent.
\end{proof}


\section{Examples of Cohomogeneity One Actions}
\label{sec:Examples}


In this section we give some examples of indecomposable cohomogeneity one actions. Some of them can be found in~\cite[p.~21]{drk}.

\begin{example}
Let $G=\SO(n+1)$ and let $K=\SO(n)$. Then $G/K=\eS^n$ is the $n$-sphere and the $K$-action on~$G/K$ is an example of a Hermann action of cohomogeneity one. By expanding the factor $\eS^n$ we obtain the $K \times K$-action on~$G$. If we further expand the factor~$G$ we obtain the action of $K \times G \times K$ on~$G \times G$ given by $(k_1,g,k_2) \cdot (g_1,g_2) = (k_1 \, g_1 \, g^{-1}, \; g \, g_2 \, k_2^{-1})$.
Since $K \subset G$ is a symmetric subgroup, we can reduce each of the $G$-factors. If we do this for both factors, we obtain the action of $\SO(n)$ on $\eS^n \times \eS^n$, see~\cite[Example~1]{drk}, where the projection actions of $\SO(n)$ on both factors are conjugate.
\end{example}

Of course, like in the example above, one may take any cohomogeneity one action on a simple compact Lie group as given in~\cite[Theorem~B]{hyperpolar} and expand factors.  This process may be repeated arbitrarily many times.
However, there are many indecomposable cohomogeneity one actions which are not given by this construction.  We will illustrate this below.

\begin{examples}\label{eg:SpinEight}
We shall give two examples for cohomogeneity one actions arising from the triality automorphism of~$\Spin(8)$.
\begin{enumerate}
\item
The compact Lie group $\Spin(8)$ has three pairwise inequivalent irreducible real $8$-di\-men\-sion\-al representations $\varrho_0$, $\varrho_1$, $\varrho_2$ and we may use them to define a Lie group homomorphism
\[
{\Spin(8) \to \SO(8) \times \SO(8) \times \SO(8)}, \quad g \mapsto (\varrho_0(g),\varrho_1(g),\varrho_2(g)).
\]
This defines an isometric action of $\Spin(8)$ on~$M := \eS^7 \times \eS^7 \times \eS^7$.
The conjugation classes of the principal isotropy groups of the three representations $\varrho_0$, $\varrho_1$, and $\varrho_2$ acting on $\eS^7$ are the three conjugacy classes of the subgroup $\Spin(7) \subset \Spin(8)$. The action of $\Spin(8)$ on~$M$ is of cohomogeneity one and indecomposable. This can be seen as follows. Let $M_1 = \eS^7 \times \eS^7$ and let $M_2 = \eS^7$. (It does not matter which factors are grouped together in~$M_2$, since there are outer automorphisms of~$\Spin(8)$ which correspond to permutations of the three representations  $\varrho_0$, $\varrho_1$, $\varrho_2$.)
Then the $\Spin(8)$-action is transitive on~$M_1$ with isotropy group~$\LG_2$. Since $\LG_2$ acts with cohomogeneity one on~$M_2 = \eS^7$, it follows that the $\Spin(8)$-action on~$M$ is of cohomogeneity one. Using Proposition~\ref{prop:NonSplt} and the fact that $\Spin(8)$ acts transitively on both factors~$M_1$ and $M_2$, it follows that the $\Spin(8)$-action on~$M$ is indecomposable cf.~\cite{drk}.
\item
The following is a modification of Example~\ref{eg:SpinEight}.
Consider the same subgroup of~$\SO(8) \times \SO(8) \times \SO(8)$ as defined above and let it act on
\[
M := \eS^7 \times \eS^7 \times \Gr_2(\R^8) = \frac{\SO(8)}{\SO(7)} \times
\frac{\SO(8)}{\SO(7)} \times \frac{\SO(8)}{\SO(2)\times\SO(6)},
\]
i.e.\ the product of two 7-spheres and the Grassmannian of (oriented) 2-planes in~$\R^8$.
Let us prove that this action is indecomposable and of cohomogeneity one. Let $M_1 := \eS^7 \times \eS^7$ and let $M_2 :=  \Gr_2(\R^8) $. As in the example above, the projection action on~$M_1$ is transitive and obviously the projection action on~$M_2$ is transitive, too. The intersection action on~$M_2$ is conjugate to the $\LG_2$-action on~$\Gr_2(\R^8) $, which is orbit equivalent to the $\SO(7)$-action on~$\Gr_2(\R^8)$, see~\cite{polar}, hence of cohomogeneity one. Now our claim follows from Proposition~\ref{prop:NonSplt}.
\end{enumerate}
\end{examples}

\begin{example}\label{egSpinSeven}
Let us give an example of a cohomogeneity one action as studied in Subsection~\ref{subsec:pp}. The group $\Spin(7)$ acts transitively on $\eS^7$ with (principal) isotropy $\LG_2$. Since $\LG_2$ acts with cohomogeneity one on~$\eS^7$, it follows that an action of~$\Spin(7)$ on~$\eS^7 \times \eS^7$ where both projection actions are given by the $8$-dimensional spin representation is indecomposable and of cohomogeneity one. This action also occurs as an intersection action in Example~\ref{eg:SpinEight}(i).
\end{example}

\begin{examples}\label{eg:telescope}
We give two examples for actions as described in Subsection~\ref{subsec:pnp},
\begin{enumerate}
\item Consider the action of $\U(3) \times \LG_2$ on $\SO(7)$, which is known to be of cohomogeneity one, see~\cite{hyperpolar}. Using the intermediate subgroup $\U(3) \subset \SO(6) \subset \SO(7)$, we may define an action of $\U(3) \times \SO(6) \times \LG_2$ on $\SO(6) \times \LG_2$ given by $(h,\ell,k)\cdot(x,y) = ( h x \ell^{-1}, \ell y k^{-1})$, where $(h,\ell,k) \in \U(3) \times \SO(6) \times \LG_2$ and $(x,y) \in \SO(6) \times \LG_2$. This action is also of cohomogeneity one and indecomposable by Proposition~\ref{prop:NonSplt}.

\item The group $\Spin(9)$ has an isometric action on $\eS^8 \times \eS^{15}$ defined by the standard and the spin representation, cf.~\cite[p.~21]{drk}. The group lift of this action is the action of $\Spin(8) \times \Spin(9) \times \SO(15)$ on $G_1 \times G_2 = \SO(9) \times \SO(16)$. By Lemma~\ref{lm:pnp}, these two actions have the same cohomogeneity as the $\Spin(8)$-action on $\eS^{15}$ which is induced by the direct sum of the two half-spin representations. This $\Spin(8)$-action is cohomogeneity one, this can proved using Table~\ref{t:Trans}. Thus the $\Spin(9)$-action on $\eS^8 \times \eS^{15}$ is of cohomogeneity one and indecomposable by Proposition~\ref{prop:NonSplt}.

\end{enumerate}
\end{examples}


\section{Open Questions}
\label{sec:OpenQ}


An intriguing question that remains open is how one can generalize the results of this article to obtain a classification of cohomogeneity one actions.
The examples in the previous section show that our results will not straightforwardly carry over to the case of indecomposable cohomogeneity one actions. Of course, it will suffice to classify irreducible (in the sense of Definition~\ref{def:Expand}) indecomposable cohomogeneity one actions on symmetric spaces of the compact type, then all other actions can be obtained by expanding factors. Finally, the question arises if it is possible to give a conceptual, i.e.\ classification-free, proof of Theorem~\ref{main:Hermann}.


\bibliographystyle{amsplain}

\end{document}